\documentclass[12pt]{amsart}

\usepackage{graphicx}
\usepackage{amsmath,amsfonts,amssymb}
\usepackage{epsfig}

\usepackage{fullpage}

\setlength{\parskip}{0ex} \normalsize
 \allowdisplaybreaks[2]


\newcommand{\todo}[1]{\par \noindent
  \framebox{\begin{minipage}[c]{0.95 \textwidth} TO DO:
      #1 \end{minipage}}\par}
  
 \newcommand{\dni}{\partial_n^{-1}}
\newcommand{\R}{\mathbb{R}}
\newcommand{\Z}{\mathbb{Z}}
\newcommand{\C}{\mathcal{C}}
\newcommand{\K}{\mathcal{K}}

\newtheorem{theorem}{Theorem}[section]
\newtheorem{proposition}[theorem]{Proposition}
\newtheorem{corollary}[theorem]{Corollary}
\newtheorem{lemma}[theorem]{Lemma}
\newtheorem{definition}[theorem]{Definition}
\newtheorem{conjecture}[theorem]{Conjecture}
\theoremstyle{remark}

\newtheorem{example}[theorem]{Example}



\title{Compositions constrained by graph Laplacian minors}

\author[B. Braun]{Benjamin Braun}
\address{Department of Mathematics\\
         University of Kentucky\\
         Lexington, KY 40506--0027}
\email{benjamin.braun@uky.edu}

\author[R. Davis]{Robert Davis}
\email{davis.robert@uky.edu}

\author[A. Harrison]{Ashley Harrison}
\email{ashley.harrison0525@uky.edu}

\author[J. McKim]{Jessica McKim}
\email{jessica.mckim@uky.edu}

\author[J. Noll]{Jenna Noll}
\email{jenna.noll@uky.edu}

\author[C. Taylor]{Clifford Taylor}
\email{clifford.taylor@uky.edu}

\date{9 February, 2012}

\thanks{This paper is the result of a Summer 2011 graduate/undergraduate research experience.
All the authors were partially supported by the NSF through award DMS-0758321 of the first author.
The first author thanks Matthias Beck, Matthias Koeppe, Peter Paule, Carla Savage, and Zafeirakis Zafeirakopoulos for many interesting discussions during the American Institute of Mathematics SQuaREs on Polyhedral Geometry and Partition Theory.}

\keywords{Integer compositions, generating functions, rational polyhedra, Ehrhart quasi-polynomials, reflexive polytopes, graph Laplacians.}

\subjclass[2010]{Primary 05A17; Secondary 05A15, 52B20, 11P21, 11D75.}


\begin{document}

\begin{abstract}
Motivated by examples of symmetrically constrained compositions, super convex partitions, and super convex compositions, we initiate the study of partitions and compositions constrained by graph Laplacian minors.
We provide a complete description of the multivariate generating functions for such compositions in the case of trees.
We answer a question due to Corteel, Savage, and Wilf regarding super convex compositions, which we describe as compositions constrained by Laplacian minors for cycles; we extend this solution to the study of compositions constrained by Laplacian minors of leafed cycles.
Connections are established and conjectured between compositions constrained by Laplacian minors of leafed cycles of prime length and algebraic/combinatorial properties of reflexive simplices.
\end{abstract}

\maketitle


\section{Introduction}

A \emph{partition} of a positive integer $n$ is a sequence $\lambda:=(\lambda_1,\lambda_2,\ldots,\lambda_k)$ of non-negative integers satisfying $\lambda_k\leq\lambda_{k-1}\leq \cdots \leq \lambda_1$ such that $\sum_i\lambda_i=n$.
We call each $\lambda_i$ a \emph{part} of $\lambda$ and say that $\lambda$ has $k$ \emph{parts}. 
A \emph{(weak) composition} of $n$ is a sequence $\lambda:=(\lambda_1,\lambda_2,\ldots,\lambda_k)$ of non-negative integers satisfying $\sum_i\lambda_i=n$.  
Beginning with \cite{andrewspa1} and through eleven subsequent papers, George Andrews and various coauthors successfully revived Percy MacMahon's technique of \emph{Partition Analysis} (PA) from obscurity as a tool for computing generating functions for partitions and compositions.
PA is particularly well-suited to the study of partitions and compositions constrained by a linear system of inequalities, i.e., of the form $A\lambda\geq b$ for some integral matrix $A$ and integral vector $b$; this is equivalent to the study of integer points in a rational polyhedral cone.
Motivated in part by this renewed interest in PA, there has been growing interest in the application of polyhedral-geometric techniques to the study of integer partitions and compositions.

For example, in \cite{PakGeomPart}, Igor Pak considers several partition theory problems from the point of view of lattice-point enumeration, leading to the establishment of unique bijective correspondences.
In \cite{beckgesselleesavage}, Matthias Beck, Ira Gessel, Sunyoung Lee, and Carla Savage consider the general case of determining integer-point transforms of symmetrically constrained families of compositions; their techniques include both unimodular decompositions of cones and the use of permutation statistics.
In \cite{BrightSavage}, Katie Bright and Carla Savage produce a bijection between certain ``boxes of width $t$'' of lecture hall partitions and integer points in the cube $[0,t]^d$; the methods for defining this bijection are motivated via geometry and again involve permutation statistics.
In \cite{BeckBraunLe}, the first author, Matthias Beck, and Nguyen Le give geometric proofs of several of the results in the PA papers by George Andrews et al; these proofs often produce multivariate generating functions in cases where PA produced only univariate identities.
Finally, motivated by \cite{beckgesselleesavage} and \cite{BrightSavage}, the first author and Matthias Beck use geometric techniques to establish new multivariate Euler-Mahonian distributions in \cite{BeckBraunEulerMahonian}.

This paper continues this line of investigation by initiating the explicit study of partitions and compositions constrained by graph Laplacian minors.
Algebraic graph theory is a rich source of interesting constraint matrices, the first being the signed incidence matrix of a graph, defined as follows.
Let $G=\left(\{0,1,\ldots,n\},E\right)$ be a simple graph with $n+1$ vertices and $|E|=m$ edges.
\begin{definition}
For any fixed edge orientation $\varepsilon$, the \emph{signed vertex-edge incidence matrix} of $G$ with respect to $\varepsilon$ is the $n+1\times m$ matrix $\partial$, with rows indexed by vertices of $G$ and columns indexed by edges of $G$, whose $(v,e)$-th entry is $1$ if $v$ is the positive end of $e$, $-1$ if $v$ is the negative end of $e$, and $0$ otherwise.
\end{definition}

The study of compositions with constraint matrices given by $\partial^T$ for various graphs was recently advanced in \cite{SavageEtAlDigraphEnumeration} by using five guidelines from MacMahon's partition analysis along with the Omega software package.
Additionally, such compositions often fall within the realm of the theory of $P$-partitions \cite[Chapter 4]{StanleyVol1}.
Our goal is to extend this line of investigation by using as constraint matrices various minors of graph Laplacians, defined as follows.
\begin{definition}
The \emph{vertex Laplacian} of $G$ is $L:=D-A$, where $D$ is the $n+1\times n+1$ matrix indexed by the vertices of $G$ such that the matrix entry $d_{i,j}$ is the degree of vertex $i$ if $i=j$ and $0$ otherwise and $A$ is the $0/1$-valued adjacency matrix of $G$ indexed in the same way as $D$.
Given a graph $G$, the \emph{$i$-th Laplacian minor} of $G$ is $L_i$, the Laplacian matrix $L$ with row $i$ and column $i$ deleted.
\end{definition}
The connection between the incidence matrix of a graph and its Laplacian is the well-known equation $L=\partial\partial^T$ \cite[Chapter 6]{BiggsAGT}.
Kirchoff's Matrix-Tree Theorem \cite{BiggsAGT} asserts that given a graph $G$ on $\{0,\ldots,n\}$ and given any $i$ between $0$ and $n$, $\det(L_i)$ is equal to the number of labeled spanning trees of $G$. 
This establishes a strong connection between Laplacian minors and enumerative properties of graphs.

There are several examples of partitions and compositions occurring in the literature on partition and composition enumeration that are constrained by graph Laplacian minors. 
The following three such examples provide further motivation for the study of these objects; note that these examples were not originally introduced in the context of graph Laplacian minors.

\begin{example}
Let $K_{n+1}$ be the complete graph on $\{0,1,\ldots,n\}$.
The compositions determined by $L_0\lambda\geq 0$ are one example of the class of \textit{symmetrically constrained compositions}.
A study of multivariate generating functions for symmetrically constrained compositions was given by Beck, Gessel, Lee, and Savage in \cite{beckgesselleesavage}.
\end{example}
\begin{example}
Let $P_n$ be the path graph with vertices $\{0,1,\ldots,n-1\}$.
The partitions determined by $L_0\lambda\geq 0$ are named \textit{super convex partitions} by Corteel, Savage, and Wilf in Example 3 of \cite{corteelsavagewilf}; they were first studied using techniques from MacMahon's Partition Analysis by Andrews in \cite{andrewspa1}, where they are named \textit{partitions with mixed difference conditions}.
The multivariate generating function for such partitions is described in \cite{corteelsavagewilf}.
\end{example}
\begin{example}\label{cycleconj}
Let $C_n$ be the cycle on vertices $\{0,1,\ldots, n-1\}$.
The compositions determined by $L_0\lambda\geq 0$ were introduced by Corteel, Savage, and Wilf in the concluding remarks of \cite{corteelsavagewilf}, where they are named \textit{super convex compositions}.
The determination of the multivariate generating function for super convex compositions was given as an open problem at the end of \cite{corteelsavagewilf}.
\end{example}

We now introduce our primary object of study.
\begin{definition}
Given a graph $G$ with Laplacian $L$ and a vertex $i$ of $G$, the \emph{compositions constrained by the $i$-th Laplacian minor of $G$} are those satisfying $L_i\lambda\geq 0$.
\end{definition}

Our paper is structured as follows.
In Section~\ref{cones}, we review techniques from integer point enumeration in polyhedral cones that will be useful for producing generating function identities.
In Section~\ref{trees}, we investigate compositions constrained by Laplacian minors for trees.
Theorem~\ref{laplacianminorthm} gives a combinatorial interpretation for the entries of the matrix inverse for tree Laplacian minors, leading to Corollary~\ref{treegenfnthm}, the main result of the section.

In Section~\ref{ncycles}, we provide a solution to the open problem of Corteel, Savage, and Wilf discussed in Example~\ref{cycleconj}.
However, as shown in the section, our solution is less clear than one might want.
Thus, motivated by the underlying graphical structure of our compositions, in Section~\ref{leafedncycles} we examine compositions constrained by Laplacian minors for leafed cycles, i.e. for cycles with an additional vertex forming a leaf appended.
These compositions are closely related to the super convex compositions in Example~\ref{cycleconj}.
The addition of a leaf to the cycle causes a change in the structure of the resulting compositions, leading to Theorem~\ref{leafedcyclethm}.
We also consider in Conjecture~\ref{mainconjecture} a possible combinatorial interpretation for the compositions constrained by leafed cycles, and we prove special cases of this conjecture.

In Section~\ref{reflexive}, we consider generating functions for certain subfamilies of compositions constrained by Laplacian minors for leafed cycles of prime length.
Surprisingly, these generating functions are Ehrhart series for a family of reflexive simplices, as shown in Theorem~\ref{reflexivethm}.
After introducing Ehrhart series and reflexive polytopes and proving Theorem~\ref{reflexivethm}, we indicate some possible connections between our enumerative problem and the algebraic/combinatorial structure of these simplices.

We close in Section~\ref{2powercycles} with a conjecture regarding compositions constrained by Laplacian minors for leafed cycles of length $2^k$.


\section{Cones and generating functions}\label{cones}

The problem of enumerating integer compositions and partitions satisfying a system of linear constraints $A\lambda \geq b$ is identical to that of enumerating integer points in the polytope or polyhedral cone defined by these constraints.
For further background related to integer point enumeration in polyhedra, see \cite{BeckRobinsCCD}.

Given a square matrix $A$ with $\det(A)\neq 0$, the solution set
\[
\C=\left\{x\in \R^n:Ax\geq 0\right\}
\]
is called an \emph{$n$-dimensional simplicial cone}.
The reason for the terminology ``simplicial'' comes from $\C$ being an $n$-dimensional cone with $n$ bounding hyperplanes, also called \emph{facets}.
The following lemma, whose proof is straightforward, indicates that simplicial cones can be described either by specifying these $n$ facets or by specifying the $n$ ray generators for the cone.

\begin{lemma}\label{facetraylemma}
Let A be an invertible matrix in $\mathbb{R}^{n\times n}$. Then
\[
\{x \in \mathbb{R}^n : Ax\geq 0\} = \{A^{-1}y : y\geq 0\}
\]
where each inequality is understood componentwise.
\end{lemma}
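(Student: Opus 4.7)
The plan is to prove set equality by establishing both inclusions directly, using the substitution $y = Ax$. Since $A$ is invertible by hypothesis, the linear map $x \mapsto Ax$ is a bijection on $\mathbb{R}^n$, which is what makes this substitution legitimate and is the only place the hypothesis is used.

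For the forward inclusion, I would take an arbitrary $x$ with $Ax \geq 0$ componentwise, set $y := Ax$, and observe that $y \geq 0$ and $x = A^{-1}y$, exhibiting $x$ as an element of $\{A^{-1}y : y \geq 0\}$. For the reverse inclusion, I would take $x = A^{-1}y$ for some $y \geq 0$ and compute $Ax = A A^{-1} y = y \geq 0$, placing $x$ in $\{x \in \mathbb{R}^n : Ax \geq 0\}$. Each direction is a single line once the substitution is set up.

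There is no real obstacle; the entire content is the change of variables afforded by the invertibility of $A$. The only thing worth flagging for the reader is the geometric interpretation this encodes: the left-hand description presents the cone $\mathcal{C}$ by its bounding facets (each row of $A$ giving one inequality $a_i \cdot x \geq 0$), while the right-hand description presents $\mathcal{C}$ by its extremal rays (the columns of $A^{-1}$, obtained as $A^{-1}e_i$ for $i=1,\ldots,n$, generate $\mathcal{C}$ as nonnegative combinations). This dual viewpoint is what will be exploited in subsequent sections when passing between constraint matrices and ray generators for cones arising from graph Laplacian minors.
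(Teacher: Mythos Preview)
Your proof is correct and is exactly the standard double-inclusion argument via the substitution $y=Ax$; the paper does not actually spell out a proof but simply declares the lemma ``straightforward,'' so your write-up matches what the authors evidently have in mind.
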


We encode the integer points in a cone $\C$ using the \emph{integer point transform} of $\C$ given by the multivariate generating function
\[
\sigma_\C(z_1,z_2,\ldots,z_n):=\sum_{m\in \C\cap \Z^n}z_1^{m_1}z_2^{m_2}\cdots z_n^{m_n} \, .
\]
This function encodes the integer points in $\C$ as a formal sum of monomials.
Throughout this paper, we will typically need to specialize $\sigma_\C$ in one of two ways.
If we set $z_1=z_2=\cdots =z_n=q$, we obtain
\[
\sigma_\C(q,q,\ldots,q)=\sigma_\C(q)=\sum_{m\in \C\cap \Z^n}q^{|m|} \, ,
\]
where $|m|=m_1+m_2+\cdots+m_n$.
Setting $z_1=q$ and $z_2=z_3=\cdots=z_n=1$, we obtain
\[
\sigma_\C(q,1,\ldots,1)=\sigma_\C(q)=\sum_{m\in \C\cap \Z^n}q^{m_1} \, .
\]

When $A$ is an integer matrix having $\det(A)=1$, the ray generating matrix $A^{-1}$ for the $n$-dimensional simplicial cone $\C=\left\{x\in \R^n:Ax\geq 0\right\}$ has integer entries; hence, every integer point in $\C$ can be expressed as $A^{-1}y$ for a unique integer vector $y\geq 0$.
In this case, the integer point transform can be simplified as a product of geometric series as follows.

\begin{lemma}
Let $A=(a_{i,j})$ be an integer matrix with $\det(A)=1$ and inverse matrix $A^{-1}=(b_{i,j})$.
Then
\[
\sigma_\C(z_1,z_2,\ldots,z_n):=\frac{1}{\prod_{j=1}^n(1-z_1^{b_{1,j}}z_2^{b_{2,j}}\cdots z_n^{b_{n,j}})  }
\]
\end{lemma}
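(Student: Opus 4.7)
The plan is to convert the sum $\sigma_\C$, which ranges over integer points in the cone, into a sum over non-negative integer vectors $y$ via the parametrization $x = A^{-1} y$ supplied by Lemma~\ref{facetraylemma}, and then recognize the result as a product of geometric series. The key preliminary observation is that the map $y \mapsto A^{-1} y$ restricts to a \emph{bijection} between $\Z_{\geq 0}^n$ and $\C \cap \Z^n$. Indeed, because $\det(A) = 1$ and $A$ has integer entries, Cramer's rule guarantees that $A^{-1}$ also has integer entries, so the image of any $y \in \Z_{\geq 0}^n$ lies in $\Z^n$ and belongs to $\C$ by Lemma~\ref{facetraylemma}. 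Conversely, given $x \in \C \cap \Z^n$, the vector $y := Ax$ is integral (since $A$ and $x$ are) and non-negative (since $Ax \geq 0$ is exactly the defining condition of $\C$), and $x = A^{-1}y$; uniqueness is immediate from the invertibility of $A$.

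With this bijection in hand, the execution is mechanical. Writing $v_j = (b_{1,j}, b_{2,j}, \ldots, b_{n,j})^T$ for the $j$-th column of $A^{-1}$, every integer $x \in \C$ has the unique representation $x = \sum_{j=1}^n y_j v_j$ with $y_j \in \Z_{\geq 0}$, so
\[
\sigma_\C(z_1,\ldots,z_n) = \sum_{y \in \Z_{\geq 0}^n} \prod_{i=1}^n z_i^{\sum_{j} y_j b_{i,j}} = \sum_{y \in \Z_{\geq 0}^n} \prod_{j=1}^n \left( z_1^{b_{1,j}} z_2^{b_{2,j}} \cdots z_n^{b_{n,j}} \right)^{y_j}.
\]
Since the summation variables $y_1,\ldots,y_n$ are independent, the sum factors as a product of $n$ geometric series, yielding the claimed identity.

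The only genuine subtlety, and the step I would flag as the main obstacle if anything, is the integrality/surjectivity argument: without $\det(A) = 1$, the map $y \mapsto A^{-1}y$ would fail to account for every lattice point in $\C$, and one would instead obtain a fundamental-parallelepiped correction factor in the numerator. The unimodularity hypothesis is precisely what collapses this correction to $1$ and makes the generating function take the clean product form above.
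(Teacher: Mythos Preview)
Your proof is correct and follows essentially the same approach as the paper. The paper does not give a separate proof of this lemma, but the paragraph preceding it sketches exactly your argument: since $\det(A)=1$, the inverse $A^{-1}$ is integral, so every integer point in $\C$ is uniquely of the form $A^{-1}y$ for some $y\in\Z_{\geq 0}^n$, and the integer point transform factors as a product of geometric series.
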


More generally, suppose $\det(A) \neq 1$.
In this case, $\sigma_\C(z_1,z_2,\ldots,z_n)$ is still rational, but $A^{-1}$ need not have integer entries.
As a result, representations of integer points in $\C$ are not necessarily obtained by using integral combinations of the minimal integral vectors on the generating rays of $\C$.
We can work around this difficulty in the following manner.

Let $v_1, v_2, \ldots, v_n$ be the (integral) ray generators of $\C$.
Define the \emph{(half-open) fundamental parallelepiped} (FPP) to be 
\[\Pi = \{\lambda_1v_1 + \lambda_2v_2 + \cdots\lambda_nv_n \ : 0 \leq \lambda_1, \lambda_2, \ldots, \lambda_n < 1\} \, .
\]
The cone $\C$ can be tiled by integer translates of $\Pi$, but there may be integer points contained in $\Pi$ that can only be obtained using strictly rational scaling factors of the ray generators of $\C$.
This leads to our main tool for computing generating functions.
\begin{lemma}\label{fpplemma}
Using the preceding notation, if the points contained in $\Pi$ are $w_1,w_2,\ldots,w_m$, the integer point transform of $\C$ is given by
\begin{equation}
\sigma_\C(z_1,z_2,\ldots,z_n):=\frac{\sum_{i=1}^m z_1^{w_{i,1}}z_2^{w_{i,2}}\cdots z_n^{w_{i,n}}}{\prod_{j=1}^n(1-z_1^{v_{1,j}}z_2^{v_{2,j}}\cdots z_n^{v_{n,j}})  }.
\end{equation}
\end{lemma}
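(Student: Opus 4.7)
The plan is to establish a unique decomposition for integer points of $\C$ and then translate it into the claimed rational function by the geometric series formula. Since $v_1,\ldots,v_n$ are linearly independent ray generators of the simplicial cone $\C$, every real point $x \in \C$ can be written uniquely as $x = \sum_{j=1}^n \lambda_j v_j$ with $\lambda_j \geq 0$. Splitting each coefficient into its integer and fractional parts, $\lambda_j = \lfloor \lambda_j \rfloor + \{\lambda_j\}$, yields
\[
x \;=\; \underbrace{\sum_{j=1}^n \{\lambda_j\} v_j}_{\in\, \Pi} \;+\; \sum_{j=1}^n \lfloor\lambda_j\rfloor \, v_j.
\]
The second summand always lies in $\Z^n$, so $x \in \Z^n$ if and only if the first summand lies in $\Z^n \cap \Pi$, i.e.\ equals some $w_i$.

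First I would verify that this decomposition is unique: if $w_i + \sum c_j v_j = w_{i'} + \sum c'_j v_j$ with $w_i, w_{i'} \in \Pi$ and $c_j, c'_j \in \Z_{\geq 0}$, then linear independence of the $v_j$ forces $c_j - c'_j$ to equal the $\lambda_j$-coordinate difference of $w_i$ and $w_{i'}$, which by the half-open definition of $\Pi$ lies in $(-1,1)$. Combined with integrality of $c_j - c'_j$, this gives $c_j = c'_j$ and hence $w_i = w_{i'}$. Conversely, the preceding paragraph shows every integer $x \in \C$ does admit such a decomposition.

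Next I would use this bijection
\[
\C \cap \Z^n \;\longleftrightarrow\; \{w_1,\ldots,w_m\} \times \Z_{\geq 0}^n
\]
to factor the integer point transform. Writing $z^v := z_1^{v_1}\cdots z_n^{v_n}$ for brevity,
\[
\sigma_\C(z_1,\ldots,z_n) \;=\; \sum_{i=1}^m \sum_{(c_1,\ldots,c_n)\in\Z_{\geq 0}^n} z^{w_i}\, z^{c_1 v_1}\cdots z^{c_n v_n}
\;=\; \Bigl(\sum_{i=1}^m z^{w_i}\Bigr)\prod_{j=1}^n \sum_{c_j \geq 0} (z^{v_j})^{c_j}.
\]
Each inner sum is a geometric series in the monomial $z^{v_j}$, which converges in a suitable open region (or formally in a completed Laurent series ring, since the $v_j$ lie in an open half-space of $\R^n$), giving $1/(1 - z^{v_j})$ and yielding the stated formula.

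The only real obstacle is the uniqueness argument in the second paragraph, where one must carefully use the half-openness of $\Pi$ together with the linear independence of the $v_j$; everything else is bookkeeping. I would not belabor the convergence point, since the identity can be read as an equality in the formal power series ring generated by $\Z^n$-translates supported in the pointed cone $\C$.
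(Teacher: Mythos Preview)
Your argument is correct and is the standard proof of this well-known fact. The paper does not actually prove this lemma; it states it as background (with reference to \cite{BeckRobinsCCD}) and uses it as a black box, so there is no ``paper's approach'' to compare against. Your tiling-by-translates-of-$\Pi$ argument, together with the uniqueness step exploiting half-openness and linear independence, is exactly the usual justification.
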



 \section{Trees}\label{trees}

We begin our study of compositions constrained by graph Laplacian minors with the special case of trees. 
Since a tree $T$ has a unique spanning tree, namely itself, the Matrix-Tree Theorem implies that $\det(L_i)=1$ for any vertex $i$. 
Thus, compositions constrained by tree Laplacian minors will have a generating function of the form
\[
\sigma_\C(z_1,z_2,\ldots,z_n)=\frac{1}{\prod_{j=1}^n(1-z_1^{b_{1,j}}z_2^{b_{2,j}}\cdots z_n^{b_{n,j}})  } \, .
\]
Since the $b_{k,j}$'s arise as the entries in the inverse matrix for $L_i$, we are interested in calculating this inverse.
We first observe that without loss of generality we may consider only those vertices $i$ which are leaves of $T$, through the following reduction.

Suppose that a vertex $i \in T$  is not a leaf. Then taking the minor at $i$ deletes row and column $i$ from the Laplacian matrix, which equates to removing vertex $i$ in the
graph. 
We note that removing the vertex $i$ will split our tree into some number of disjoint subtrees, which we label $s_1, \ldots,s_k$.
Label the vertices of $s_1$ consecutively from $r_1 + 1 , r_1 + 2, \ldots, r_2$, then label the vertices of $s_2$ consecutively as $r_2+1, r_2 + 2 \ldots,r_3$, and so on until labeling the vertices of $s_k$ as $r_{k}+1, \ldots,r_{k+1}$.
Now consider the $r_{k+1} \times r_{k+1}$ matrix $M$, indexed naturally.
$M$ has a block form resulting from the fact that the subtrees $s_1, \ldots,s_k$ are distinct and disconnected.
In particular, each subtree $s_j$, $1 \leq j \leq k$, will have a corresponding block matrix $M_j$ consisting of rows $r_{j}+1, \ldots, r_{j+1}$ and columns $r_{j}+1, \ldots, r_{j+1}$.

\tiny 

 \[
\Large \textbf{$M$} \ \  \ \textbf{=} \ \ \ \tiny \bordermatrix{
   & & & & & & & & & & & & &  \cr
   & & & &| & & & & & & & & &  \cr
   & & \mbox{\large $M_1$} & & |& & & & & & & & & \cr
   & & & &| & & & & & & && \mbox{\Large 0}&  \cr
   & \textbf{--} & \textbf{--}&\textbf{--} & \textbf{--}& \textbf{--}&\textbf{--} &\textbf{--} & & & & & &  \cr
   & & & &| & & & &| & & & & &  \cr
   & & & & |& & \mbox{\large $M_2$} & &| & & & & &  \cr
   & & & & |& & & & |& & & & &  \cr
   & & & & &\textbf{--} &\textbf{--} &\textbf{--} & & & & & &  \cr
   & & & & & & & & & \ddots & & & &  \cr
   & & & & & & & & & & & \textbf{--}& \textbf{--}&\textbf{--}  \cr
   & & &\mbox{\Large 0} & & & & & & & |& & &  \cr
   & & & & & & & & & & |& & \mbox{ \large $M_k$} & \cr
   & & & & & & & & & &| & & & }.
\]
\normalsize

Further, each $M_j$ will correspond to the Laplacian minor for $s_j$ where the minor is taken at $i$, which is now simply a leaf of $s_j$. 
By properties of block matrices, we have that $M^{-1}$ will be a block matrix with blocks $M_j^{-1}$.
Thus, to find $M^{-1}$, we compute the Laplacian minor inverse for each subtree.
With this observation, we can begin our search for a general form of the generating function of compositions constrained by Laplacian minors of trees.


\subsection{Inverses of Laplacian minors for trees}\label{laplacianinversetrees}

To determine the denominator of $\sigma_{\C}$ for an arbitrary tree $T$ minored at a leaf, we first find a combinatorial interpretation for the columns of the inverse of the Laplacian minor for $T$.
(While Theorem~\ref{laplacianminorthm} below seems likely to be known to experts in algebraic graph theory, we could not find an explicit statement of it in the literature.)
Throughout this section, we assume that $T$ is a tree on the vertex set $\{0,\ldots,n\}$ with an arbitrary orientation of the edges of $T$.

\begin{theorem}\label{laplacianminorthm}
Consider a tree $T$ and its corresponding Laplacian matrix $L$. Minor at a leaf $n$ of $T$ and let $L_n^{-1}=(l_{i,j})$. 
Then entry $l_{i,j}$ is the distance from $n$ to the path connecting the vertices $i$ and $j$. 
Equivalently, $l_{i,j}$ is the length of the intersection of the path from $n$ to $i$ with the path from $n$ to $j$.
\end{theorem}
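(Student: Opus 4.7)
The plan is to root $T$ at the leaf $n$ and reformulate. Under this rooting the two descriptions of $l_{i,j}$ in the statement coincide: the unique path between $i$ and $j$ in $T$ passes through $\mathrm{LCA}(i,j)$, which is its closest vertex to $n$, while the two root-paths $n\to i$ and $n\to j$ share exactly the initial segment ending at $\mathrm{LCA}(i,j)$. In both readings $l_{i,j}=d(n,\mathrm{LCA}(i,j))$. Since $L_n$ is symmetric it is enough to verify $L_n M=I$ for $M=(l_{i,j})$. Fix a column index $k\neq n$ and define $f_k\colon V(T)\to\mathbb{Z}$ by $f_k(v)=d(n,\mathrm{LCA}(v,k))$ for $v\neq n$ and $f_k(n):=0$; the second value is consistent with the first convention since $\mathrm{LCA}(n,k)=n$. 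Because $f_k(n)=0$, deleting the $n$-th column of $L$ has no effect on the relevant sum, so for each $i\neq n$,
\[
(L_n M)_{i,k}\;=\;\sum_{j\neq n}L_{i,j}\,f_k(j)\;=\;\sum_{j}L_{i,j}\,f_k(j)\;=\;(Lf_k)(i),
\]
where $(Lf_k)(i)=\deg(i)\,f_k(i)-\sum_{j\sim i}f_k(j)$ is the graph Laplacian applied to $f_k$. The theorem thus reduces to the discrete Green's-function identity $(Lf_k)(i)=\delta_{i,k}$ for all $i\neq n$.

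The core step is a case analysis based on the position of $i$ with respect to the unique path $n=u_m,u_{m-1},\ldots,u_0=k$ in the rooted tree, using that for any $v\neq n$ the vertex $\mathrm{LCA}(v,k)$ is the first vertex of this path one encounters walking from $v$ up to the root. First, if $i$ lies off the path, every neighbor of $i$---its parent, each of its children, and even $n$ itself in case $i$ is a child of $n$---shares this first-hit vertex with $i$, so $f_k$ is constant on $\{i\}\cup N(i)$ and $(Lf_k)(i)=0$. Second, if $i=u_t$ for some $1\leq t\leq m-1$, every neighbor of $i$ other than $u_{t-1}$ and $u_{t+1}$ is off the path with $f_k$-value $m-t=f_k(i)$, while the two path-neighbors contribute $f_k(i)-f_k(u_{t-1})=-1$ and $f_k(i)-f_k(u_{t+1})=+1$, for a net of $0$. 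Third, if $i=k$, every neighbor of $k$ other than its parent $u_1$ is a child of $k$ with $f_k$-value $m=f_k(k)$, so only the term $f_k(k)-f_k(u_1)=1$ survives and $(Lf_k)(k)=1$. Together these three cases give $(Lf_k)(i)=\delta_{i,k}$ for all $i\neq n$.

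The only mildly delicate point is tracking the ``boundary'' neighbors that involve the root---namely the vertex $u_{m-1}$ adjacent to $n$, and the case $m=1$ in which $k$ itself is adjacent to $n$. Both are absorbed without modification by the convention $f_k(n)=0$, which matches the formal LCA value $m-m=0$, so the three cases above apply uniformly without further splitting. No induction on $|T|$ or appeal to Kirchhoff's theorem is required, and the combinatorial formula for $l_{i,j}$ drops out directly from these neighborhood computations.
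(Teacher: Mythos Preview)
Your proof is correct. The key reduction $(L_nM)_{i,k}=(Lf_k)(i)$ via the convention $f_k(n)=0$ is clean, and the three-case computation of $(Lf_k)(i)$ is accurate; one small remark is that the parenthetical about $i$ being a child of $n$ in Case~1 is vacuous, since $n$ is a leaf whose unique neighbor $u_{m-1}$ lies on the path, but this does no harm.

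Your route differs genuinely from the paper's. The paper exploits the factorization $L_n=\partial_n\partial_n^T$ coming from the signed incidence matrix: it first proves that $\partial_n^{-1}=G_n$ has $(e,j)$-entry $\pm 1$ exactly when edge $e$ lies on the $n$--$j$ path (sign determined by orientation), and then reads off $L_n^{-1}=G_n^TG_n$, so that $l_{i,j}$ is the inner product of columns $i$ and $j$ of $G_n$, i.e.\ the number of edges common to the two root-paths. Your argument bypasses the incidence matrix entirely and checks $L_nM=I$ directly by a discrete-harmonic computation on the rooted tree. What the paper's approach buys is an explicit description of $\partial_n^{-1}$ as a bonus and a structural explanation of why the intersection-of-paths interpretation arises (it is literally a Gram matrix of edge-indicator vectors). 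What your approach buys is self-containment---no orientation, no auxiliary matrix, no separate proposition---and a transparent ``Green's function'' picture: $f_k$ is the unique function harmonic away from $k$ and $n$, vanishing at $n$, with unit source at $k$.
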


Theorem~\ref{laplacianminorthm} has the following implication on the level of generating functions.
Let $\sigma_{\C}(q)$ denote the specialiazed generating function $\sigma_{\C}(q,q,\ldots,q)$.

\begin{corollary}\label{treegenfnthm}
The specialized generating function for a tree $T$ minored at leaf $n$ is given by
\[
\sigma_\C(q) = \frac{1}{\prod_{i=0}^{n-1} (1- q^{b_i})},
\]
where $b_i$ is the sum of the distances between $n$ and the paths connecting vertex $i$ with the other vertices of $T$.
\end{corollary}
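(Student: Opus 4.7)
The plan is to combine three ingredients that are already in hand: Kirchhoff's Matrix-Tree Theorem, the product-form generating function for simplicial cones with a unimodular constraint matrix, and the combinatorial description of the entries of $L_n^{-1}$ from Theorem~\ref{laplacianminorthm}.

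First, since $T$ is a tree its unique spanning tree is itself, so the Matrix-Tree Theorem gives $\det(L_n)=1$; in particular $L_n^{-1}$ has integer entries. Applying Lemma~\ref{facetraylemma} to the constraint system $L_n\lambda\geq 0$, the cone $\C$ is simplicial with ray generators equal to the columns of $L_n^{-1}$, and the $\det=1$ product formula from Section~\ref{cones} then yields
\[
\sigma_\C(z_1,\ldots,z_n)=\prod_{j=1}^{n}\frac{1}{1-z_1^{b_{1,j}}z_2^{b_{2,j}}\cdots z_n^{b_{n,j}}},
\]
where $(b_{i,j})=L_n^{-1}$.

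Second, I would specialize $z_1=\cdots=z_n=q$, which collapses the $j$-th factor in the denominator to $1-q^{\sum_i b_{i,j}}$. By Theorem~\ref{laplacianminorthm}, $b_{i,j}$ equals the distance from the leaf $n$ to the unique path in $T$ joining vertices $i$ and $j$ (with the degenerate $i=j$ case reading "path from $j$ to $j$" as the singleton $\{j\}$, so that $b_{j,j}=d(n,j)$). Summing over $i$ therefore produces the sum of the distances from $n$ to the paths connecting vertex $j$ with the other vertices of $T$, which is exactly the quantity $b_j$ in the statement; reindexing the product over $j\in\{0,\ldots,n-1\}$ gives the claimed formula.

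The main observation is that Theorem~\ref{laplacianminorthm} has already done the substantive combinatorial work, so this corollary is essentially a bookkeeping step: unimodularity of $L_n$ collapses the integer point transform to a product of geometric series, and the specialization $z_i\mapsto q$ replaces each column of $L_n^{-1}$ by its column sum, which Theorem~\ref{laplacianminorthm} interprets as a total distance in $T$. Consequently I do not anticipate a genuine obstacle; the only care needed is aligning the $\{0,\ldots,n-1\}$ indexing of the vertices of $T\setminus\{n\}$ with the $\{1,\ldots,n\}$ indexing of the matrix in the generating function lemma.
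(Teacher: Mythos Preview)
Your proposal is correct and mirrors the paper's own reasoning: the paper sets up precisely this argument at the start of Section~\ref{trees} (Matrix-Tree gives $\det(L_n)=1$, hence the product formula applies), and then states Corollary~\ref{treegenfnthm} as an immediate consequence of Theorem~\ref{laplacianminorthm} without a separate proof. Your care with the diagonal term $b_{j,j}=d(n,j)$ is appropriate and consistent with how the paper computes column sums in the $k$-ary tree subsection.
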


To prove Theorem \ref{laplacianminorthm}, we return to the fact that $L=\partial\partial^T$ and proceed combinatorially.
We say the \emph{$n$-th subminor of $\partial$} (resp. of $\partial^{T}$) imposed by a vertex $n$ of a tree is the square matrix in which the $n$-th row, (resp. column) is deleted.
We denote the $n$-th subminor of $\partial$ as $\partial_{n}$, with the understanding that $\partial_n^T$ is $(\partial_n)^T$. 
It is easy to see that since $L=\partial\partial^{T}$, we have $L_{n}=\partial_{n}\partial_{n}^{T}$; thus, $L_{n}^{-1}=(\partial_{n}^{T})^{-1}\partial_{n}^{-1}$.

\begin{proposition}
Let $G_n=(g_{e,j})$, $0\leq j\leq n-1$, where $g_{e,j}$ is determined as follows: 
\begin{itemize}
\item $1$ if edge $e$ is on the path between $j$ and $n$ and $e$ points away from $n$;
\item $-1$ if  edge $e$ is on the path between $j$ and $n$ and $e$ points toward $n$;
\item $0$ if edge $e$ is not on the path between $j$ and $n$.
\end{itemize}
Then $G_{n}\partial_{n} = I$, hence $\dni=G_{n}$.
\end{proposition}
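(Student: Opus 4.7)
The plan is to compute the $(e,e')$-entry of the product $G_n\partial_n$ directly from the definitions and verify that it equals the Kronecker $\delta_{e,e'}$. Since $T$ has exactly $n$ edges, both $G_n$ and $\partial_n$ are $n\times n$ matrices, and the product is naturally indexed by pairs of edges:
\[
(G_n\partial_n)_{e,e'} \;=\; \sum_{j\neq n} g_{e,j}\,(\partial_n)_{j,e'}.
\]
For fixed $e'$, the factor $(\partial_n)_{j,e'}$ vanishes unless $j$ is an endpoint of $e'$, so the sum collapses to at most two nonzero terms, contributed by the endpoints of $e'$ that are distinct from $n$.

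For the diagonal case $e=e'$, write the endpoints of $e$ as $a,b$ with $a$ in the component of $T\setminus e$ that does not contain $n$ (the ``far side'') and $b$ in the component containing $n$. Only $j=a$ can contribute a nonzero $g_{e,j}$, since the path from $b$ to $n$ does not traverse $e$. A short case split on the orientation of $e$ shows that the signs in $g_{e,a}$ and $(\partial_n)_{a,e}$ always combine to give $+1$: if $e$ points away from $n$ both are $+1$, and if $e$ points toward $n$ both are $-1$. The corner subcase $b=n$ causes no trouble, because then $b$ is simply absent from the sum but the surviving $j=a$ contribution is unchanged.

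For the off-diagonal case $e\neq e'$, the key observation is the following: \emph{for any edge $e$ and any two vertices $u,v$ lying in the same component of $T\setminus e$, one has $g_{e,u}=g_{e,v}$.} Indeed, if both lie on the $n$-side, neither path from $u$ or $v$ to $n$ uses $e$, so both entries are $0$; if both lie on the non-$n$ side, both paths traverse $e$ in the same direction (``toward $n$'' is intrinsic to $e$ relative to $n$, independent of which path one considers), so the two entries share a common value in $\{+1,-1\}$. Since $e\neq e'$ is exactly the condition that removing $e$ does not separate the endpoints $a',b'$ of $e'$, the observation forces $g_{e,a'}=g_{e,b'}$, and the two terms in the sum cancel against the opposite signs $\pm 1$ contributed by $\partial_n$.

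The remaining bookkeeping is the handling of boundary subcases where an endpoint of $e$ or $e'$ coincides with the removed vertex $n$; there the sum has only a single surviving term, which must be checked to equal $0$ (when $e\neq e'$) or $1$ (when $e=e'$) by direct inspection of the relevant path. I expect this case-splitting on orientation and on whether endpoints equal $n$ to be the only real obstacle, and none of the cases is substantive once the ``same-side'' observation is in hand. Finally, having established $G_n\partial_n = I$, the conclusion $\partial_n^{-1}=G_n$ is immediate from the fact that a left inverse of a square matrix is also a right inverse.
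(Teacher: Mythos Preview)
Your proof is correct and follows the same direct-verification strategy as the paper: compute the product entry by entry and check that the diagonal terms are $1$ while the off-diagonal terms vanish. The one notable difference is in the indexing. The paper, despite writing $G_n\partial_n$, argues with vertex indices $i,j$ (in effect verifying $\partial_n G_n = I$, which is equivalent for square matrices): for the off-diagonal entry it observes that the path from $j$ to $n$ meets the edges incident to $i$ in either zero or exactly two edges, and in the latter case a sign analysis gives cancellation. You instead index by pairs of edges and compute $(G_n\partial_n)_{e,e'}$ literally, with your ``same-side'' observation ($g_{e,u}=g_{e,v}$ whenever $u,v$ lie in the same component of $T\setminus e$) doing the work in the off-diagonal case. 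These are dual formulations of the same combinatorial fact, so the two arguments are essentially the same proof viewed from opposite ends of the product.
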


\begin{proof}
Set $G_{n}\partial_{n}=M=(m_{i,j})$ and first consider $m_{i,i}$. 
Entries of row $i$ of $\partial_n$ are non-zero for each edge, i.e., column, adjacent to $i$. 
The path connecting vertices $i$ and $n$ overlaps with exactly one of the edges adjacent to $i$, which is reflected in $G_n$. 
By construction, the entries in $\partial_n$ and $G_n$ corresponding to this vertex/edge pairing have the same sign. 
So $m_{i,i}=1$ for all $i$.

Now consider $m_{i,j}$ with $i\neq j$. 
The path from $j$ to $n$ overlaps with either no edges or exactly two edges connected to $i$. 
If there is no overlap, then the product of row $i$ in $G_n$ and column $j$ in $\partial_n$ is zero.  
If there is overlap, suppose both edges point either toward or away from $n$. 
Then the entries in $G_n$ have the same sign, but those in $\partial_n$ have different signs, thus summing to zero. 
A similar canceling occurs if the edges point in different directions. 
So, $m_{i,j}=0$ if $i\neq j$, implying $M=I$. 
Therefore,  $G_n=\dni$. 
\end{proof}

\begin{proof}[Proof of Theorem \ref{laplacianminorthm}]
Since $L_{n}^{-1}=(\partial_{n}^{T})^{-1}\partial_{n}^{-1}$, we may multiply columns $i$ and $j$ in $\dni$ component-wise to find $l_{i,j}$. 
Consider an arbitrary edge of the graph. 
If the edge belongs to only one of the paths from $i$ to $n$ or from $j$ to $n$, then there will be a zero corresponding to that edge in either column $i$ or $j$, contributing nothing to entry $l_{i,j}$. 
Otherwise we obtain a $1$, since the sign of a row for any edge is constant. 
Thus, summing the component-wise product will record the length of the path from $n$ to the path between $i$ and $j$. 
\end{proof}

Theorem \ref{laplacianminorthm} provides some insight into the form of the generating function for compositions constrained by an arbitrary tree. 
However, determining the lengths of paths between vertices in trees is not an easy exercise in general. 
In the next subsection, we analyze a special case.


\subsection{$k$-ary trees}\label{karytrees}

We next provide an explicit formula for a specialized generating function for compositions constrained by Laplacian minors of $k$-ary trees.
Recall that a $k$-ary tree is a tree formed from a root node by adding $k$ leaves to the root, then adding $k$ leaves to each of these vertices, etc., stopping after a finite number of iterations of this process.
To study the compositions constrained by a Laplacian minor of a $k$-ary tree $T$, we add a leaf labeled $0$ to the root of $T$ and minor at this new vertex.
We define the \emph{level} of a vertex as its distance to vertex $0$ and a \emph{subtree} of a vertex as the tree which includes the vertex and all of its children (we assume all edges are oriented away from vertex $0$). 
Recall that $[n]_q:=1+q+q^2+q^3+ \cdots +q^{n-1}$.

\begin{theorem}
For a $k$-ary tree with $n$ levels,
\begin{eqnarray*}
\sigma_\C(q)&=&\frac{1}{\displaystyle  \prod_{j=1}^{n} \left(1-q^{\textstyle j[n-j+1]_k+\sum_{i=1}^{j-1}(j-i)k^{n-(j-i)})}\right)^{\textstyle k^{j-1}}}\\
		    &=& \frac{1}{\displaystyle \prod_{j=1}^{n}\left(1-q^{ \textstyle j[n-j+1]_k+k^{n-j}(j([j]_k-1)-k\frac{d}{dk}([j]_k-1))}\right)^{\textstyle k^{j-1}}}
 \end{eqnarray*}

\end{theorem}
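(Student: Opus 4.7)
The plan is to invoke Corollary~\ref{treegenfnthm} and reduce the statement to a vertex-by-vertex computation of the exponents $b_i$. By that corollary, for the $k$-ary tree $T$ (with an extra leaf labeled $0$ attached to the root and the minor taken at $0$), we have
\[
\sigma_\C(q)=\frac{1}{\displaystyle\prod_v\bigl(1-q^{b_v}\bigr)},
\]
where $v$ ranges over all non-$0$ vertices and $b_v$ is the sum over all non-$0$ vertices $v'$ of the distance from $0$ to the path joining $v$ and $v'$, equivalently (by Theorem~\ref{laplacianminorthm}) the depth of the lowest common ancestor of $v$ and $v'$.

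First I would observe that in a $k$-ary tree the value $b_v$ depends only on the level $j$ of $v$. This is the source of the exponent $k^{j-1}$ appearing on each factor, since there are exactly $k^{j-1}$ vertices at level $j$ of the rooted tree (with the original root at level $1$ and our added vertex $0$ at level $0$). So it suffices to fix a vertex $v$ at level $j$ and compute $b_v$ once.

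Next I would partition the non-$0$ vertices $v'$ according to the level of the LCA of $v$ and $v'$. Let $p_1,p_2,\dots,p_j=v$ be the ancestors of $v$, with $p_\ell$ at level $\ell$. Then the LCA of $v$ and $v'$ equals $p_\ell$ exactly when $v'$ sits in the subtree rooted at $p_\ell$ but outside the subtree rooted at $p_{\ell+1}$. For $\ell=j$ the LCA is $v$ itself, which contributes the $[n-j+1]_k=1+k+\cdots+k^{n-j}$ descendants of $v$ (including $v$). For $1\le\ell<j$ the count is $[n-\ell+1]_k-[n-\ell]_k=k^{n-\ell}$, namely the size of the subtree at $p_\ell$ minus the size of the subtree at its child $p_{\ell+1}$. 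Summing the depth $\ell$ weighted by these counts gives
\[
b_v=j\,[n-j+1]_k+\sum_{\ell=1}^{j-1}\ell\cdot k^{n-\ell},
\]
which under the reindexing $i=j-\ell$ becomes exactly the exponent in the first formula. Combining with the multiplicity $k^{j-1}$ at each level yields the first displayed expression.

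Finally, to get the second form I would verify the identity
\[
\sum_{\ell=1}^{j-1}\ell\,k^{n-\ell}=k^{n-j}\Bigl(j([j]_k-1)-k\tfrac{d}{dk}([j]_k-1)\Bigr)
\]
by computing $k\tfrac{d}{dk}([j]_k-1)=\sum_{m=1}^{j-1}m\,k^m$ and $j([j]_k-1)=\sum_{m=1}^{j-1}j\,k^m$, so their difference is $\sum_{m=1}^{j-1}(j-m)k^m$; multiplying by $k^{n-j}$ and substituting $\ell=j-m$ recovers $\sum_\ell \ell\,k^{n-\ell}$. There is no real obstacle here; the only place that requires care is the partition-by-LCA counting argument, particularly the edge cases $\ell=1$ (ancestors of $v$ are all accounted for) and $\ell=j$ (so as not to double count $v$ itself), and the careful bookkeeping that the sums are indexed so that the claimed formula reappears exactly.
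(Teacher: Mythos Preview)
Your proposal is correct and follows essentially the same route as the paper: invoke Corollary~\ref{treegenfnthm}, use symmetry to reduce to a single vertex at level $j$, and partition the remaining vertices according to where their path to $v$ meets the path from $0$ to $v$. Your indexing via the lowest common ancestor $p_\ell$ is just a relabeling of the paper's $w_i$ (with $\ell=j-i$), and your reindexing $i=j-\ell$ recovers the stated exponent exactly. You also supply the short algebraic verification of the second displayed form, which the paper asserts but does not write out.
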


\begin{proof}
Suppose a $k$-ary tree $T$ is given.
In order to find the generating function consider the column sums of $L_0^{-1}$.
Choose a particular column which will correspond to a vertex $v \in T$.
The column sum will be the sum of the distances between $0$ and all distinct paths in the tree with $v$ as an endpoint.
By the symmetry of $k$-ary trees, we choose an arbitrary vertex $v$ at level $j\in [n]$, since the corresponding column sum will be the same as any other vertex on that level.

We claim that $j[n-j+1]_k+\sum_{i=1}^{j-1}(j-i)(k^{n-(j-i)})$ is the column sum corresponding to any vertex at level $j$.
For any vertex $c$ which is a child of $v$ (and including $v$), the distance between 0 and the path from $v$ to $c$ will be $j$.
There are
\[
1+k+k^{2}+ \cdots +k^{n-j} = \frac{k^{n-j+1}-1}{k-1}=[n-j+1]_k
\]
such vertices and each of these vertices contributes j to the column sum.
Hence in total these vertices contribute the first term to our claimed exponent.

Consider the path between $0$ and $v$, $P=\{v=w_0,w_1,w_2, \ldots, w_{j-1},0\}$.
For each $i\in [j-1]$, define $S_i$ to be the set of vertices which are in the subtree with $w_i$ as the root but not in the subtree with $w_{i-1}$ as the root.
The vertex $w_i$ is on level $j-i$ of $T$.
Thus, the subtree whose root is $w_i$ has $n-(j-i)$ levels of its own.
So, there are $k^{0}+k^1+ \cdots +k^{n-(j-i)}$ vertices in the subtree with $w_i$ as a root, and $k^0+k^1+ \cdots +k^{n-(j-i)-1}$ vertices in the subtree of $w_{i-1}$.
Thus, there are
\[
 k^{0}+k^1+ \cdots +k^{n-(j-i)}-(k^0+k^1+ \cdots +k^{n-(j-i)-1})=k^{n-(j-i)}
 \]
vertices in $S_i$.
By definition, the shortest distance between 0 and the path connecting $v$ and any vertex $s \in S_i$ is $j-i$, since the path connecting $v$ and $s$ passes through $w_i$, which must be the closest point on this path to $0$.

Therefore, for each $i \in [j-1]$, the sum of the columns corresponding to vertices in $S_i$ will be $(j-i)(k^{n-(j-i)})$.
So, for all vertices which are not children of $v$, the total column sums will be $\sum_{i=1}^{j-1}(j-i)(k^{n-(j-i)})$.
Hence,  $j[n-j+1]_k+\sum_{i=1}^{j-1}(j-i)(k^{n-(j-i)})$ is the column sum corresponding to any vertex at level $j\in [n]$.
Since there are $k^{j-1}$ vertices on each level, for a fixed $j$, the term
\[
1-q^{ \textstyle j[n-j+1]_k+k^{n-j}(j([j]_k-1)-k\frac{d}{dk}([j]_k-1))}
\]
will appear $k^{j-1}$ times as an exponent in the denominator of our generating function.
\end{proof}

When $k=2$, the formulas for our exponents simplify nicely, as follows.

\begin{corollary}
For a binary tree with $n$ levels,
\begin{eqnarray*}
\sigma_\C(q)&=&\frac{1}{\displaystyle \prod_{j=1}^{n} \left(1-q^{\textstyle j(2^{n-j+1}-1)+\sum_{i=1}^{j-1}(j-i)(2^{n-(j-i)})}\right)^{\textstyle 2^{j-1}}}\\
		   &=&\frac{1}{\displaystyle \prod_{j=1}^{n} \left(1-q^{\textstyle 2^{n-j+1}(2^{j}-1)-j}\right)^{\textstyle 2^{j-1}}}
\end{eqnarray*}
\end{corollary}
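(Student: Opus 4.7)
The first equality in the corollary is obtained immediately by setting $k=2$ in the first form of the preceding theorem and applying the identity $[m]_2 = 2^m-1$ term by term. So the real content is the second equality, which reduces to proving the scalar identity
$$j\bigl(2^{n-j+1}-1\bigr) + \sum_{i=1}^{j-1}(j-i)\, 2^{n-(j-i)} \;=\; 2^{n-j+1}(2^j-1) - j$$
for each $1 \le j \le n$.

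First I would reindex the sum via $m = j-i$, turning it into $\sum_{m=1}^{j-1} m\, 2^{n-m}$. Factoring out $2^n$ leaves $2^n \sum_{m=1}^{j-1} m/2^m$. The closed form
$$\sum_{m=1}^{N} \frac{m}{2^m} \;=\; 2 - \frac{N+2}{2^N}$$
follows either by differentiating the geometric series $\sum_{m\ge 0} x^m = 1/(1-x)$ at $x=1/2$ and truncating, or by a one-line induction on $N$. Specializing to $N=j-1$ and multiplying by $2^n$ gives $\sum_{m=1}^{j-1} m\, 2^{n-m} = 2^{n+1} - (j+1)\, 2^{n-j+1}$.

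Adding the leading term $j(2^{n-j+1}-1)$ and combining the two multiples of $2^{n-j+1}$ (namely $j \cdot 2^{n-j+1}$ and $-(j+1)\, 2^{n-j+1}$) produces $2^{n+1} - 2^{n-j+1} - j$, which factors as $2^{n-j+1}(2^j - 1) - j$ as required. An alternative route, which I would take if the partial geometric-type sum feels opaque to the reader, is to prove the scalar identity directly by induction on $j$: the base case $j=1$ reads $2^n-1 = 2^n(2-1)-1$, and the telescoping difference $E(j+1)-E(j) = 2^{n-j}-1$ is easily seen to match the difference of the right-hand sides.

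There is no conceptual obstacle here; the simplification works precisely because the case $k=2$ eliminates the awkward $k\frac{d}{dk}([j]_k-1)$ term appearing in the second form of the preceding theorem, and the remaining expression collapses to an arithmetico-geometric sum with a clean closed form. The main thing to watch is bookkeeping of powers of $2$ when collecting the $j \cdot 2^{n-j+1}$ term against the reindexed sum.
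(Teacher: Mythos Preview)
Your argument is correct; the paper actually omits this computation entirely, only hinting that the key step is the identity $\sum_{i=1}^{j-1}(j-i)2^i =\sum_{i=1}^{j-1}(2+2^2+\cdots+2^i)$, obtained by factoring $2^{n-j}$ out of the sum rather than $2^n$ and then switching the order of summation. Your use of the closed form $\sum_{m=1}^{N} m/2^m = 2-(N+2)/2^N$ (or the alternative induction on $j$) evaluates the same arithmetico-geometric sum by an equally standard route and lands on the identical intermediate value $2^{n+1}-(j+1)2^{n-j+1}$, so the two approaches are essentially the same.
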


We omit the proof of the equivalence of the two rational forms in our Corollary, as they are the result of a tedious calculation.
For those interested in the calculation, we mention that the key step is the use of $\sum_{i=1}^{j-1}(j-i)2^i =\sum_{i=1}^{j-1}(2+2^2+\cdots+2^i)$.


\section{$n$-cycles}\label{ncycles}

In this section we consider the open problem due to Corteel, Savage, and Wilf given in Example~\ref{cycleconj}.
Recall that the Laplacian minor for a general $n$-cycle on vertices $1,2,\ldots,n$, labelled cyclically and minored at $n$, has the following form when rows and columns are indexed $1,2,\ldots,n-1$.
\[
L_{n,cyc}:=
\left( \begin{array}{cccccccc}
2 & -1 & 0 & 0 & \cdots & 0 & 0 & 0\\
-1 & 2 & -1  & 0 & \cdots & 0 & 0 & 0\\
0 & -1 & 2  & -1 & \cdots & 0 & 0 & 0\\
\vdots & \vdots & \vdots & \vdots  & \ddots & \vdots & \vdots & \vdots\\
0 & 0 & 0  & 0 & \cdots & -1 & 2 & -1\\
0 & 0 & 0 & 0 & \cdots & 0 & -1 & 2\\
   \end{array} \right) \, .
\] 
Define 
\[
\K_n:=\{\lambda\in \R^{n-1} : L_{n,cyc} \lambda \geq 0  \} \quad \text{and} \quad \sigma_{\K_n}(z_1,\ldots,z_{n-1}):=\sum_{\lambda\in \K_n\cap \Z^{n-1}}z_1^{\lambda_1}\cdots z_{n-1}^{\lambda_{n-1}} \, .
\]
Our goal is to study the integer point transform for $\K_n$ by enumerating the integer points in the fundamental parallelepiped for $\K_n$; along the way, we will see that there is a hidden additive structure to the ray generators of $\K_n$ which is useful.

By Lemma~\ref{facetraylemma}, the rows of $L_{n,cyc}$ determine the normal vectors to the facets of $\K_n$, while the columns of $L_{n,cyc}^{-1}$ provide (non-integral) ray generators of $\K_n$.
We first explicitly describe the entries in $L_{n,cyc}^{-1}$.
\begin{proposition}\label{ncycleinverse}
For an $n$-cycle and $1\leq i,j \leq n-1$, $L_{n,cyc}^{-1}=(b_{i,j})$ where
\[
  b_{i,j} = \left\{
  \begin{array}{l l}
    \frac{i(n-j)}{n} & \text{ if } i \leq j\\
    \frac{j(n-i)}{n} & \text{ if } i>j\\
  \end{array} \right. .
\]
\end{proposition}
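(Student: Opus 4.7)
My plan is to verify the formula by direct computation that $L_{n,\text{cyc}} B = I$, where $B = (b_{i,j})$ is the matrix given in the statement. The cleanest way to organize this is to observe that the formula can be written uniformly as $b_{i,j} = \frac{\min(i,j)(n - \max(i,j))}{n}$, and to extend this definition to indices $i \in \{0, n\}$, where both extensions give $b_{0,j} = b_{n,j} = 0$ for every $1 \leq j \leq n-1$. With this convention, the $i$-th row of $L_{n,\text{cyc}}$ acts on the $j$-th column of $B$ as $-b_{i-1,j} + 2 b_{i,j} - b_{i+1,j}$ for every $1 \leq i \leq n-1$, because at $i=1$ the phantom term $-b_{0,j}$ is zero, and similarly at $i = n-1$ the term $-b_{n,j}$ is zero. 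Thus it suffices to show that for fixed $j$,
\[
-b_{i-1,j} + 2 b_{i,j} - b_{i+1,j} = \delta_{i,j}, \qquad 1 \leq i \leq n-1.
\]

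The key observation is that, with $j$ fixed, the function $f(i) := b_{i,j}$ is piecewise linear: $f(i) = \frac{i(n-j)}{n}$ for $0 \leq i \leq j$ and $f(i) = \frac{j(n-i)}{n}$ for $j \leq i \leq n$, with $f(0) = f(n) = 0$. The quantity $-f(i-1) + 2f(i) - f(i+1)$ is the negative discrete second difference of $f$, which automatically vanishes on any three consecutive indices lying in a single linear piece. This handles the two cases $i < j$ and $i > j$ at once, with no further casework on boundary rows.

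The remaining case $i = j$ is where the two linear pieces meet; here the slopes jump from $(n-j)/n$ on the left to $-j/n$ on the right, and a short calculation gives $-f(j-1) + 2f(j) - f(j+1) = 1$. This ``jump in slope'' is the only contribution to the identity matrix, and it lives exactly on the diagonal, as desired.

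I do not expect any serious obstacle: the main content of the proof is the piecewise-linearity observation, after which the verification is a two-line computation in each of the three cases $i<j$, $i=j$, $i>j$. The only thing to be careful about is the boundary rows of $L_{n,\text{cyc}}$ (the first and last rows, which have only two nonzero entries); these are handled uniformly by the convention $b_{0,j} = b_{n,j} = 0$, reducing them to instances of the interior identity above. A more conceptual alternative would be to recognize $B$ as the discrete Green's function for the Dirichlet problem on $\{0, 1, \ldots, n\}$, but for the purposes of this proposition the direct check is already short enough to record in full.
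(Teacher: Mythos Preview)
Your proposal is correct and follows essentially the same approach as the paper: both define $B$ by the stated formula and verify directly that the product with $L_{n,\mathrm{cyc}}$ is the identity via a case analysis on the relative position of $i$ and $j$. Your organization via the piecewise-linearity of $i\mapsto b_{i,j}$ and the vanishing of discrete second differences on linear pieces is a tidy way to package the casework (and handles the boundary rows uniformly), but it is the same computation the paper sketches.
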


\begin{proof}
Define the matrix $B=(b_{i,j})$ using the $b$-values given above.
Let $B\cdot L_{n,cyc}=(a_{i,j})$; since $L_{n,cyc}$ and $B$ are symmetric matrices, it follows that $a_{i,j}=a_{j,i}$.
Entry $a_{i,j}$ is given by $2b_{i,j}-b_{i,x}-b_{i,y}$ where $x$ and $y$ are the neighbors of $j$ in the cycle, where we drop the $b$-terms for $x$ or $y$ equal to $n$.
It is straightforward to show that $BL_{n,cyc}=I$ through a case-by-case analysis. 

As a representative example, for $i<j$ we have
\begin{align*}
a_{i,j} &=2b_{i,j}-b_{i,j-1}-b_{n,j+1} \\
&=2\left(\dfrac{i(n-j)}{n}\right)-\left(\dfrac{i(n-j+1)}{n}\right)-\left(\dfrac{i(n-j-1)}{n}\right) \\
&=0.
\end{align*}
\end{proof}

We next scale the matrix $L_{n,cyc}^{-1}$ by $n$ so that the columns are integral; in doing so, we find that there is group-theoretic structure to the ray generators of $\K_n$, in the following sense.

\begin{proposition}\label{cyclelaplacianmodn}
Let the matrix $M:=nL_{n,cyc}^{-1} \, \, (\bmod~n)$. Then
\[
M=
 \begin{pmatrix}
 | & | & \cdots & | & |\\
 v_{1} & v_{2} & \cdots & v_{n-2} & v_{n-1}\\
 |  & | & \cdots & | & |
 \end{pmatrix}
\]
where $v_1= (n-1,n-2,\ldots,2,1)^T \in \mathbb{Z}_n^{n-1}$ and $v_k=kv_1 \in \mathbb{Z}_n^{n-1}$.
\end{proposition}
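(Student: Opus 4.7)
The plan is to read off the entries of $nL_{n,cyc}^{-1}$ directly from Proposition~\ref{ncycleinverse} and then reduce them modulo $n$. From that proposition, the $(i,j)$ entry of $nL_{n,cyc}^{-1}$ is $i(n-j)$ when $i \leq j$ and $j(n-i)$ when $i > j$. The key observation driving everything is the single modular identity
\[
i(n-j) \equiv -ij \equiv j(n-i) \pmod{n},
\]
so both cases collapse to the same value. Thus every entry of $M$ is given by the uniform formula $M_{i,j} = (n-i)j \bmod n$, independent of whether $i \leq j$ or $i > j$.

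Once that formula is in hand, the structural claim is immediate. First I would read off column $1$: $M_{i,1} = (n-i)\cdot 1 \bmod n = n-i$ for $i = 1, \ldots, n-1$, which is exactly $v_1 = (n-1, n-2, \ldots, 2, 1)^T$ as claimed. Then for column $k$, I would observe that
\[
M_{i,k} = (n-i)k \bmod n = k \cdot M_{i,1} \bmod n,
\]
so $v_k \equiv k v_1 \pmod{n}$ componentwise, which is the assertion of the proposition.

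There is essentially no obstacle here beyond verifying the modular identity above; the rest is bookkeeping. The only point worth a sanity check is that the case split at $i = j$ in Proposition~\ref{ncycleinverse} does not cause any inconsistency. Since $i = j$ falls in the $i \leq j$ branch, it contributes $i(n-i)$, which agrees with $(n-i)\cdot i \bmod n$, matching the uniform formula. With this in place, both claims of the proposition follow at once.
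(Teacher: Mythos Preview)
Your proof is correct and follows essentially the same approach as the paper: reduce both cases of Proposition~\ref{ncycleinverse} to the single congruence $m_{i,j}\equiv -ij\pmod n$, read off $v_1$, and conclude $v_k=kv_1$. The paper writes the common value as $-ij\bmod n$ rather than $(n-i)j\bmod n$, but this is the same argument.
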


\begin{proof}
Let $M=(m_{i,j})$.
Since $m_{i,j}$ is equal to either $i(n-j)$ or $j(n-i)$, it follows that $m_{i,j}\equiv -ij \bmod n$.
Consider column $v_1$.
We find
\begin{align*}
v_1&=\left(-1\cdot 1,-2\cdot 1,\dotsc,-(n-1)\cdot 1\right)^T \bmod n, \\
&=\left(n-1, n-2, \dotsc, 1\right)^T \in \mathbb{Z}_n^{n-1}.\\
\end{align*}
Evaluating an arbitrary column $v_k$, $1 \leq k \leq n-1$, we have
\begin{align*}
v_k&=\left(-1\cdot k,-2\cdot k,\dotsc,-(n-1)\cdot k\right)^T \bmod n, \\
&=k\cdot v_1 \in \mathbb{Z}_n^{n-1}.
\end{align*}
\end{proof}

Our next step is to use the columns of $nL_{n,cyc}^{-1}$ as ray generators for $\K_n$, leaving us only to understand the integer points in $\Pi_n$, the fundamental parallelepiped for $\K_n$.
For any integer point $\lambda\in \K_n$, since $L_{n,cyc}$ is an integer matrix, $L_{n,cyc}\lambda = c \in \Z^{n-1}$.
Thus, every integer point in $\K_n$ can be expressed in the form $L_{n,cyc}^{-1}c$ for an integral $c$; further, if $\lambda\in \Pi_n$, then
\[
\lambda = \left(nL_{n,cyc}^{-1}\right)\left(\frac{c}{n}\right)
\]
where the entries in $c$ are necessarily taken from $\{0,1,2,\ldots,n-1\}$.
The group-theoretic structure of $nL_{n,cyc}^{-1} \, \, (\bmod~n)$ allows us to classify these points.

\begin{proposition}
Suppose that the point $x=(x_1,x_2, \ldots, x_{n-1})\in \Pi_n$ is of the form $L_{n,cyc}^{-1}c$ for some vector $c$ with entries taken from $\{0,1,2,\ldots,n-1\}$.
Then $x=(x_1,x_2, \ldots, x_{n-1})\in \K_n$ is an integer point if and only if $x_1$ is an integer.
\end{proposition}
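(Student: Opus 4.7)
The plan is to express $x$ in the form $x = \tfrac{1}{n}\bigl(nL_{n,cyc}^{-1}\bigr)c$ and exploit the explicit description of $nL_{n,cyc}^{-1}\pmod{n}$ provided by Proposition~\ref{cyclelaplacianmodn}. The forward direction is immediate, since if $x \in \Z^{n-1}$ then in particular $x_1 \in \Z$. So the work is in the reverse direction: assume $x_1 \in \Z$ and deduce $x_i \in \Z$ for all $i$.

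First I would observe that the integer vector $nx \in \Z^{n-1}$ satisfies
\[
nx \;=\; \bigl(nL_{n,cyc}^{-1}\bigr)c \;=\; \sum_{k=1}^{n-1} c_k\,(nL_{n,cyc}^{-1})_{\cdot,k},
\]
and then reduce modulo $n$. By Proposition~\ref{cyclelaplacianmodn}, the $k$-th column of $nL_{n,cyc}^{-1}$ reduces to $k v_1 \in \Z_n^{n-1}$, where $v_1 = (n-1,n-2,\ldots,1)^T$. Setting $s := \sum_{k=1}^{n-1} k c_k$, this gives
\[
nx \;\equiv\; s\,v_1 \pmod{n},
\]
which componentwise reads $n x_i \equiv s(n-i) \equiv -si \pmod{n}$.

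The conclusion then drops out of elementary divisibility. The condition $x_i \in \Z$ is equivalent to $n \mid n x_i$, i.e., $si \equiv 0 \pmod n$. Specializing to $i=1$ shows that $x_1 \in \Z$ if and only if $n \mid s$. But once $n \mid s$, the congruence $n x_i \equiv -si \pmod{n}$ gives $n \mid n x_i$ for every $i$, hence $x_i \in \Z$ for all $i$. This proves both implications simultaneously, and in fact yields the stronger statement that the integrality of any single coordinate $x_i$ with $\gcd(i,n)=1$ already forces the integrality of $x$.

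There is no real obstacle here beyond correctly bookkeeping residues: the content of Proposition~\ref{cyclelaplacianmodn} has already collapsed the $(n-1)$-dimensional lattice condition into a single scalar congruence $s \equiv 0 \pmod n$, and the first coordinate of $v_1$ happens to be $n-1 \equiv -1 \pmod n$, a unit, which is exactly why the coordinate $x_1$ (rather than some other one) serves as a faithful detector of integrality. I would write the proof as two short displays following the reduction above, with no case analysis needed.
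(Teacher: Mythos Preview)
Your proof is correct and follows essentially the same route as the paper's own argument: both reduce $nx$ modulo $n$, invoke Proposition~\ref{cyclelaplacianmodn} to exploit $v_k = k v_1$, and conclude that integrality of the first coordinate forces integrality of all coordinates. The only cosmetic difference is that you organize the computation via columns (defining the scalar $s=\sum_k k c_k$), whereas the paper phrases it via rows; since $L_{n,cyc}^{-1}$ is symmetric these are the same computation, and your extra remark about any coordinate $x_i$ with $\gcd(i,n)=1$ serving as a detector is a nice bonus not stated in the paper.
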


\begin{proof}
If $x$ is an integer vector, then $x_1$ is certainly an integer.  For the converse, suppose that $x_1$ is an integer. 
This happens precisely in the case where, for $l$ denoting the first row of $nL_n^{-1}$, we have that $l\cdot c = \sum_{j=1}^n l_{1,j}c_j$ is an integer multiple of $n$. 
Thus, $v_1 \cdot c \equiv \sum_{j=1}^n v_{1,j}c_j \equiv  0 \bmod{n}$ where $v_1$ is as defined in Proposition~\ref{cyclelaplacianmodn}.
Since $v_k=kv_1$,
\[
\sum_{j=1}^n v_{k,j}c_j \equiv \sum_{j=1}^n kv_{1,j}c_j \equiv k\sum_{j=1}^n v_{1,j}c_j \equiv  0 \bmod{n}.
\]
Therefore, $x_k$ is also an integer.
\end{proof}

Thus, the integer points in $\Pi_n$ are parametrized by the set of solutions to the system 
\[
\sum_{j=1}^{n-1}(n-j)c_j\equiv 0 \, (\bmod \, n)
\]
where $c_j\in \{0,1,2,\ldots,n-1\}$.
Equivalently, these points are parametrized by the set of partitions of multiples of $n$ into positive parts not exceeding $n-1$, with no more than $n-1$ of each part.
The solution set to this system of equations has arisen before in the study of ``Hermite reciprocity'' given in \cite{elashvilietal}.
Our next theorem follows directly from the preceding discussion.

\begin{theorem}\label{ncyclegenfnthm}
Let $S_n$ denote the set of solutions to the system $\sum_{j=1}^{n-1}(n-j)c_j\equiv 0 \, (\bmod \, n)$ where $c_j\in \{0,1,2,\ldots,n-1\}$.
Let $l_{i,j}$ denote the $i,j$-th entry of $nL_{n,cyc}^{-1}$.
Then
\[
\sigma_{\K_n}(z_1,\ldots,z_n) = \frac{\sum_{c\in S_n}\left(z_1^{\sum_{j=1}^nl_{1,j}c_j}z_2^{\sum_{j=1}^nl_{2,j}c_j}\cdots z_n^{\sum_{j=1}^nl_{n,j}c_j}\right)}{\prod_{j=1}^n(1-z_1^{l_{1,j}}z_2^{l_{2,j}}\cdots z_n^{l_{n,j}})} \, .
\]
\end{theorem}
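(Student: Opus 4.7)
The plan is to apply Lemma~\ref{fpplemma} directly, taking as ray generators the columns of $nL_{n,cyc}^{-1}$, and then read off the integer points of the fundamental parallelepiped $\Pi_n$ from the analysis preceding the statement.

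First, I would verify that the columns of $nL_{n,cyc}^{-1}$ are a legitimate choice of (integral) ray generators for $\K_n$. By Lemma~\ref{facetraylemma} and Proposition~\ref{ncycleinverse}, the columns of $L_{n,cyc}^{-1}$ are ray generators; scaling each column by the positive integer $n$ gives integral generators for the same rays, and hence for the same cone $\K_n$. The corresponding denominator contribution in Lemma~\ref{fpplemma} is then exactly $\prod_{j=1}^{n-1}(1 - z_1^{l_{1,j}} z_2^{l_{2,j}} \cdots z_{n-1}^{l_{n-1,j}})$, where $l_{i,j}$ is the $(i,j)$-entry of $nL_{n,cyc}^{-1}$.

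Next, I would enumerate the lattice points in $\Pi_n$. With the integral ray generators $v_1,\ldots,v_{n-1}$ (columns of $nL_{n,cyc}^{-1}$), every point in $\Pi_n$ has the unique form
\[
x \;=\; \sum_{j=1}^{n-1} \tfrac{c_j}{n}\, v_j \;=\; L_{n,cyc}^{-1}\, c,
\]
where $c = (c_1,\ldots,c_{n-1})$ has entries in $\{0,1,\ldots,n-1\}$; this parametrization is a bijection from $\{0,\ldots,n-1\}^{n-1}$ to $\Pi_n$. By the proposition immediately preceding the theorem, such an $x$ is integral if and only if its first coordinate is, which in turn is equivalent to $\sum_{j=1}^{n-1}(n-j)c_j \equiv 0 \pmod{n}$, i.e.\ to $c \in S_n$. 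So $c \mapsto L_{n,cyc}^{-1} c$ is a bijection between $S_n$ and $\Pi_n \cap \Z^{n-1}$.

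Finally, for $c \in S_n$ the corresponding lattice point $x = L_{n,cyc}^{-1} c$ has $i$-th coordinate $\tfrac{1}{n}\sum_{j=1}^{n-1} l_{i,j} c_j$, so its monomial contribution to the numerator in Lemma~\ref{fpplemma} is
\[
\prod_{i=1}^{n-1} z_i^{\tfrac{1}{n}\sum_{j} l_{i,j} c_j}.
\]
(The $\tfrac{1}{n}$ factor appears in the statement as the unscaled $l_{i,j}$'s, reflecting that the ray generators were scaled by $n$; I would double-check this normalization against the form given in the theorem and reconcile any factor of $n$ accordingly.) Summing over $c \in S_n$ and dividing by the product over the ray generators yields the stated formula. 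The only real content of the proof is the bijection between $S_n$ and $\Pi_n \cap \Z^{n-1}$, which has been established in the discussion above, so there is no substantive obstacle; the main care point is bookkeeping the factor of $n$ between $L_{n,cyc}^{-1}$ and $nL_{n,cyc}^{-1}$ in the exponents.
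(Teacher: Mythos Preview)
Your proposal is correct and follows exactly the paper's route: the paper states that the theorem ``follows directly from the preceding discussion,'' and your write-up recapitulates precisely that discussion (Lemma~\ref{facetraylemma} for the ray description, the columns of $nL_{n,cyc}^{-1}$ as integral ray generators, the proposition characterizing integer points in $\Pi_n$ via the first coordinate, and Lemma~\ref{fpplemma} to assemble the generating function). Your caution about the factor of $n$ in the numerator exponents is well placed: with $l_{i,j}$ defined as the entries of $nL_{n,cyc}^{-1}$, the lattice point $x=L_{n,cyc}^{-1}c$ has coordinates $\tfrac{1}{n}\sum_j l_{i,j}c_j$, so the exponents in the numerator should carry that $\tfrac{1}{n}$ (the denominator exponents, coming from the ray generators themselves, are correctly unscaled).
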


While Theorem~\ref{ncyclegenfnthm} and its specializations resolve the question of Corteel, Savage, and Wilf, this resolution is not as clean as one might like.
In the next section, we show how a slight modification of the graph underlying this problem leads to much more simply stated and elegant, but still closely related, univariate generating function identities.


\section{Leafed $n$-cycles}\label{leafedncycles}

In this section, we examine the case of compositions constrained by $n$-cycles augmented with a leaf; we will refer to such a graph as a \emph{leafed $n$-cycle}.
Throughout this section, we label the leaf vertex as $n$ and label the vertices of the $n$-cycle cyclically as $0,1,\ldots,n-1$, where $n$ is adjacent to $0$ as depicted in Figure~\ref{ncycletailpic}.

\begin{figure}[ht]
 \includegraphics[width={5cm}]{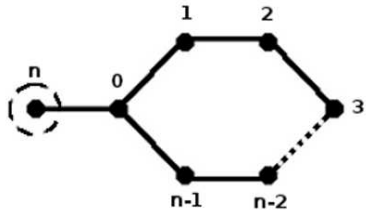}
 \caption{A leafed $n$-cycle}
 \label{ncycletailpic}
\end{figure}

For a leafed $n$-cycle, we denote by $L_n$ the minor of the Laplacian taken at vertex $n$ and observe that it has the form
\[
L_n \ = \ \left( \begin{array}{ccccccccc}
3 & -1 & 0 & 0 & 0 & \cdots & 0 & 0 & -1\\
-1 & 2 & -1 & 0 & 0 & \cdots & 0 & 0 & 0\\
0 & -1 & 2 & -1  & 0 & \cdots & 0 & 0 & 0\\
0 & 0 & -1 & 2  & -1 & \cdots & 0 & 0 & 0\\
\vdots & \vdots & \vdots & \vdots & \vdots  & \ddots & \vdots & \vdots & \vdots\\
0 & 0 & 0 & 0  & 0 & \cdots & -1 & 2 & -1\\
-1 & 0 & 0 & 0 & 0 & \cdots & 0 & -1 & 2\\
   \end{array} \right)
\]
where the columns (resp. rows) are labeled from top to bottom (resp. left to right) by $0,1,2,\ldots,n-1$.
Define 
\[
\C_n:=\{\lambda\in \R^n : L_n \lambda \geq 0  \} \quad \text{and} \quad \sigma_{\C_n}(q):=\sum_{\lambda\in \C_n\cap \Z^n}q^{\lambda_0} \, .
\]
While not immediately apparent, the reason for focusing on the first coordinate of integer points in $\C_n$ will become clear.


\subsection{A generating function identity}

Our main result regarding leafed cycles is the following generating function identity.
\begin{theorem}\label{leafedcyclethm}
Let $S_n$ denote the set of solutions to the system 
\[
0\cdot c_0 + \sum_{j=1}^{n-1}(n-j)c_j\equiv 0 \, (\bmod \, n) \, ,
\]
where $c_j\in \{0,1,2,\ldots,n-1\}$.
For $c\in S_n$, define $\phi(c):=\sum_{j=0}^{n-1}c_j$.
Then
\[
\sigma_{\C_n}(q)=\frac{\sum_{c\in S_n}q^{\phi(c)}}{(1-q^n)^n} \, .
\]
\end{theorem}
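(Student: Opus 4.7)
The plan is to apply Lemma~\ref{fpplemma} to the simplicial cone $\C_n$, whose ray generators are (by Lemma~\ref{facetraylemma}) the columns of $L_n^{-1}$. I would first show that the $0$-th row and $0$-th column of $L_n^{-1}$ are both the all-ones vector $\mathbf{1}$. This follows from the direct calculation $L_n \mathbf{1} = e_0$: rows $1, 2, \ldots, n-1$ of $L_n$ each sum to $0$ by the Laplacian property, while row $0$ sums to $3 - 1 - 1 = 1$. Hence $L_n^{-1} e_0 = \mathbf{1}$, and by symmetry of $L_n^{-1}$ the $0$-th row is $\mathbf{1}^T$ as well. Since $\det(L_n) = n$ by the Matrix-Tree Theorem (every spanning tree of the leafed $n$-cycle contains the leaf edge and omits exactly one of the $n$ cycle edges), the matrix $nL_n^{-1}$ is integral, with $0$-th row and column consisting entirely of $n$'s.

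Next I would derive an explicit formula for the remaining entries. Fix $j \geq 1$ and let $y$ be the $j$-th column of $L_n^{-1}$, so $y_0 = 1$ from the previous step. Introducing a virtual boundary value $y_n := 1$, the vector $(y_1, \ldots, y_{n-1})$ solves the tridiagonal Dirichlet problem on the path $1, 2, \ldots, n-1$ with source at position $j$, yielding
\[
(L_n^{-1})_{i,j} \;=\; 1 + \frac{\min(i, j)\bigl(n - \max(i, j)\bigr)}{n}, \qquad 1 \leq i, j \leq n-1.
\]
This is verified directly against $L_n y = e_j$; the only nontrivial check is the row-$0$ equation $y_1 + y_{n-1} = 3$, which collapses to $\tfrac{n-j}{n} + \tfrac{j}{n} = 1$. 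Consequently
\[
(nL_n^{-1})_{i,j} \;=\; n + \min(i, j)\bigl(n - \max(i, j)\bigr) \;\equiv\; -ij \pmod{n}
\]
for $i, j \geq 1$, while the $0$-th row and column of $nL_n^{-1}$ vanish modulo $n$.

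With these preparations I would invoke Lemma~\ref{fpplemma} using the columns $v_0, \ldots, v_{n-1}$ of $nL_n^{-1}$ as ray generators. Each $v_j$ has $0$-th coordinate $n$, so specializing the variable associated with $\lambda_0$ to $q$ and every other variable to $1$ collapses each denominator factor to $1 - q^n$, producing the advertised $(1-q^n)^n$. The numerator becomes $\sum_{w \in \Pi_n \cap \Z^n} q^{w_0}$. Following the strategy of Section~\ref{ncycles}, points of $\Pi_n$ are parametrized by $c \in [0, n)^n$ via $w = L_n^{-1} c$; an integer $w$ forces $c = L_n w \in \{0, 1, \ldots, n-1\}^n$, and conversely such a $c$ produces an integer $w$ precisely when $(nL_n^{-1})c \equiv 0 \pmod n$ componentwise. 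The condition from row $0$ is automatic, while row $i \geq 1$ reduces via the congruence above to $i \sum_{j=1}^{n-1} j c_j \equiv 0 \pmod n$; together these are equivalent to the single requirement $\sum_{j=1}^{n-1}(n - j) c_j \equiv 0 \pmod n$ defining $S_n$. For $c \in S_n$, the first coordinate of $w = L_n^{-1} c$ is $\sum_{j=0}^{n-1}(L_n^{-1})_{0,j} c_j = \sum_{j=0}^{n-1} c_j = \phi(c)$, yielding the claimed numerator.

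The main obstacle is the second step, pinning down the explicit form of $L_n^{-1}$. Attaching the leaf modifies only the $(0,0)$-entry of the cycle Laplacian, yet this seemingly innocuous perturbation both renders the matrix invertible and produces the clean ``$\mathbf{1}$ plus path Green's function'' decomposition that drives the rest of the argument; once this decomposition is in hand, the remaining reasoning is a tidy adaptation of the machinery used in Section~\ref{ncycles} for the pure cycle case.
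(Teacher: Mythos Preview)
Your proposal is correct and follows essentially the same route as the paper: compute $L_n^{-1}$ explicitly (the paper's Proposition~\ref{ncycletail}), reduce $nL_n^{-1}$ modulo $n$ to classify the integer points in the fundamental parallelepiped (Propositions~\ref{laplacianmodn} and the subsequent proposition), observe that the $0$-th row of $L_n^{-1}$ is all ones so that $\lambda_0=\sum_j c_j$, and apply Lemma~\ref{fpplemma}. Your derivation of $L_n^{-1}$ via $L_n\mathbf{1}=e_0$ together with the path Green's function is a slightly more conceptual packaging than the paper's direct case-by-case verification that $L_nB=I$, but the content and overall strategy are the same.
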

Note that $\sum_{c\in S_n}q^{\phi(c)}$ is the ordinary generating function for the ``number of parts'' statistic on the set of all non-negative partitions of multiples of $n$ into parts not exceeding $n-1$, with no more than $n-1$ of each part.
The similarity between Theorem~\ref{leafedcyclethm} and Theorem~\ref{ncyclegenfnthm} is immediate.
However, by switching from $n$-cycles to leafed $n$-cycles, a more elegant univariate identity is obtained than those obtained by straightforward specializations of Theorem~\ref{ncyclegenfnthm}.
We remark that our proof of Theorem~\ref{leafedcyclethm} can easily be extended to produce a multivariate statement analogous to Theorem~\ref{ncyclegenfnthm}; we omit this, since it is the specialization above that we find the most interesting consequence of the multivariate identity.

As in the previous section, our goal is to apply Lemma~\ref{fpplemma}.
We first explicitly describe the entries in $L_n^{-1}$, which are closely related to the entries of $L_{n,cyc}^{-1}$.

\begin{proposition}\label{ncycletail}
For a leafed $n$-cycle and $0\leq i,j \leq n-1$, $L_n^{-1}=(b_{i,j})$ where
\[
  b_{i,j} = \left\{
  \begin{array}{l l}
    \frac{i(n-j)}{n}+1 & \text{ if } i \leq j\\
    \frac{j(n-i)}{n}+1 & \text{ if } i>j\\
  \end{array} \right. .
\]
\end{proposition}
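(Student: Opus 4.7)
The plan is to verify directly that $L_n B = I$, where $B = (b_{i,j})$ is the matrix defined by the formula in the statement; this mirrors the strategy used in the proof of Proposition~\ref{ncycleinverse}. The key structural observation is the decomposition $B = B' + J$, where $J$ is the $n \times n$ all-ones matrix and $B'$ is obtained from $L_{n,\mathrm{cyc}}^{-1}$ (in Proposition~\ref{ncycleinverse}) by appending a row and a column of zeros indexed by $0$. This decomposition reflects the ``$+1$'' shift in the formula and lets us recycle the identity $L_{n,\mathrm{cyc}} L_{n,\mathrm{cyc}}^{-1} = I$ already in hand.

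First I would compute $L_n J$. Every row of $L_n$ has entries summing to $0$ except row $0$, whose entries $(3,-1,0,\ldots,0,-1)$ sum to $1$ because of the extra edge to the leaf. Hence $L_n J$ has $1$'s in row $0$ and $0$'s elsewhere, so verifying $L_n B = I$ reduces to showing $(L_n B')_{i,k} = \delta_{i,k}$ for $i \ge 1$ and $(L_n B')_{0,k} = \delta_{0,k} - 1$ for all $k$. For the middle rows $1 \le i \le n-2$ and for row $n-1$, the neighbor stencil applied to $B'$ coincides with the corresponding stencil for $L_{n,\mathrm{cyc}}$ (the $-b'_{0,k}$ terms arising at $i=1$ and $i=n-1$ vanish since $B'$ has a zero $0$-th row), so these cases follow immediately from Proposition~\ref{ncycleinverse}.

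The only remaining case is row $0$, where the computation is $(L_n B')_{0,k} = -b'_{1,k} - b'_{n-1,k}$, checked in three sub-cases ($k=0$, $1 \le k \le n-2$, $k=n-1$) to equal $0$, $-1$, $-1$ respectively, which is exactly $\delta_{0,k} - 1$. The only potential obstacle is keeping the boundary bookkeeping straight in row $0$; everything hinges on the extra weight on the $(0,0)$ entry of $L_n$ coming from the leaf being precisely the amount needed to balance the constant shift introduced by $J$. Once this short row-$0$ verification is done, all other rows follow automatically from Proposition~\ref{ncycleinverse}, and the proof is complete.
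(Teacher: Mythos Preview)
Your argument is correct. Both you and the paper verify $L_nB=I$ directly, but the organization differs: the paper simply runs through the case analysis entry-by-entry (giving $a_{0,j}=3b_{0,j}-b_{1,j}-b_{n-1,j}=0$ as a representative case and leaving the rest to the reader), whereas you introduce the decomposition $B=B'+J$ and thereby reduce almost all of the work to the already-established identity $L_{n,\mathrm{cyc}}L_{n,\mathrm{cyc}}^{-1}=I$ from Proposition~\ref{ncycleinverse}. Your route makes explicit \emph{why} the ``$+1$'' in the formula is exactly right---it is the constant needed so that $L_nJ$ fills in row~$0$---and it isolates the single genuinely new computation (row~$0$ acting on $B'$) rather than redoing the tridiagonal case analysis from scratch. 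The paper's approach is more self-contained but less structural; yours is shorter once Proposition~\ref{ncycleinverse} is in hand and better explains the relationship between the two inverse formulas.
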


\begin{proof}
Define the matrix $B=(b_{i,j})$ using the $b$-values given above.
Let $L_nB=(a_{i,j})$; since $L_n$ and $B$ are symmetric matrices, it follows that $a_{i,j}=a_{j,i}$.
For $i\neq 0$, entry $a_{i,j}$ is given by $2b_{i,j}-b_{x,j}-b_{y,j}$ where $x$ and $y$ are the neighbors of $i$ in the cycle; the case $i=0$ is similar.
From these observations, it is straightforward to show that $L_nB=I$ through a case-by-case analysis. 

For example, for $j\neq 0$, $a_{0,j} = 0$ is shown as follows.
\begin{align*}
a_{0,j} &=3b_{0,j}-b_{1,j}-b_{n-1,j} \\
&=3\left(\dfrac{0(n-j)}{n}+1\right)-\left(\dfrac{1(n-j)}{n}+1\right)-\left(\dfrac{j(n-(n-1))}{n}+1\right) \\
&=0.
\end{align*}
\end{proof}

Note that every entry in the top row (and in the first column, by symmetry) of $L_n^{-1}$ is equal to $1$; it is this property of the inverse that allows us to effectively study the generating function recording only the first entry of each composition in $\C_n$.
We next scale the matrix $L_n^{-1}$ by $n$ so that the columns (i.e. ray generators of our cone) are integral; in doing so, we find that there is group-theoretic structure to the columns of $L_n^{-1}$, again related closely to that for $L_{n,cyc}^{-1}$.

\begin{proposition}\label{laplacianmodn}
Let the matrix $M=nL_n^{-1} (\bmod~n)$. Then
\[
M=
 \begin{pmatrix}
   | & | & | & \cdots & | & |\\
  v_0 & v_{1} & v_{2} & \cdots & v_{n-2} & v_{n-1}\\
|& |  & | & \cdots & | & |
 \end{pmatrix}
\]
where $v_1= (0,n-1,n-2,\ldots,2,1)^T \in \mathbb{Z}_n^n$ and $v_k=kv_1 \in \mathbb{Z}_n^n$.
\end{proposition}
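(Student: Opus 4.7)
The plan is to reduce the statement to a direct calculation using the explicit formula for $L_n^{-1}$ given in Proposition~\ref{ncycletail}. First I would scale the formula by $n$: for each $(i,j)$ the $(i,j)$-th entry of $nL_n^{-1}$ equals either $i(n-j)+n$ or $j(n-i)+n$, according to whether $i\leq j$ or $i>j$. The key observation is that both expressions reduce to the same residue modulo $n$, namely
\[
i(n-j)+n \equiv -ij \pmod{n} \qquad \text{and} \qquad j(n-i)+n \equiv -ij \pmod{n},
\]
since the additive term $n$ contributes $0$ and $in - ij \equiv -ij \equiv jn - ij \pmod n$. Thus the piecewise definition collapses to the uniform expression $M_{i,j} \equiv -ij \pmod n$, regardless of the relative sizes of $i$ and $j$.

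With this uniform description in hand, the remaining verifications are immediate. For column $j=0$, every entry $M_{i,0} \equiv 0 \pmod n$, yielding $v_0 = (0,0,\ldots,0)^T$. For $j=1$, the column is
\[
(0,\,-1,\,-2,\,\ldots,\,-(n-1))^T \equiv (0,\,n-1,\,n-2,\,\ldots,\,1)^T \pmod n,
\]
which matches the stated $v_1$. For an arbitrary column $j=k$ with $1 \leq k \leq n-1$, the entries are $M_{i,k} \equiv -ik \equiv k(-i) \pmod n$, so the column equals $k \cdot v_1$ in $\mathbb{Z}_n^n$, giving $v_k = kv_1$ as claimed.

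There is no real obstacle to overcome here; the content of the proposition is entirely packaged inside Proposition~\ref{ncycletail}, and the only step requiring any care is verifying that the two branches of the piecewise formula agree modulo $n$. The proof will therefore be very short, a few lines of modular arithmetic following the scaling. The point of the proposition, as in the analogous Proposition~\ref{cyclelaplacianmodn} for $n$-cycles, is to record that the columns of $nL_n^{-1}$ form a cyclic subgroup of $\mathbb{Z}_n^n$ generated by $v_1$, which is exactly what is needed to parametrize the integer points in the fundamental parallelepiped $\Pi_n$ in the subsequent application of Lemma~\ref{fpplemma}.
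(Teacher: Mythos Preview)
Your proposal is correct and follows essentially the same approach as the paper: use Proposition~\ref{ncycletail} to write each entry of $nL_n^{-1}$ as $i(n-j)+n$ or $j(n-i)+n$, observe both reduce to $-ij\pmod n$, then read off $v_1$ and $v_k=kv_1$ directly. The paper's proof is nearly identical, differing only in that it omits your explicit remark about $v_0$ and your closing commentary on the role of the proposition.
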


\begin{proof}
Let $M=(m_{i,j})$.
Since $m_{i,j}$ is equal to either $i(n-j)+n$ or $j(n-i)+n$, it follows that $m_{i,j}\equiv -ij \bmod n$.
Consider column $v_1$.
We find
\begin{align*}
v_1&=\left(-0\cdot 1,-1\cdot 1,-2\cdot 1,\dotsc,-(n-1)\cdot 1\right)^T \bmod n, \\
&=\left(0, n-1, n-2, \dotsc, 1\right)^T \in \mathbb{Z}_n^n.\\
\end{align*}
Evaluating an arbitrary column $v_k$, $0 \leq k \leq n-1$, we have
\begin{align*}
v_k&=\left(-0\cdot k,-1\cdot k,-2\cdot k,\dotsc,-(n-1)\cdot k\right)^T \bmod n, \\
&=k\cdot v_1 \in \mathbb{Z}_n^n.
\end{align*}
\end{proof}

As in the case of $n$-cycles, our next step is to use the columns of $nL_{n}^{-1}$ as ray generators for $\C_n$; we must understand the integer points in $\Pi_n$, which now denotes the fundamental parallelepiped for $\C_n$.
For any integer point $\lambda\in \C_n$, since $L_{n}$ is an integer matrix, $L_{n}\lambda = c \in \Z^{n}$.
Thus, every integer point in $\C_n$ can be expressed in the form $L_{n}^{-1}c$ for an integral $c$.
As before, if $\lambda\in \Pi_n$, then
\[
\lambda = \left(nL_{n}^{-1}\right)\left(\frac{c}{n}\right)
\]
where the entries in $c$ are necessarily taken from $\{0,1,2,\ldots,n-1\}$.
The group-theoretic structure of $nL_{n}^{-1} \, \, (\bmod~n)$ allows us to classify these points.

\begin{proposition}
Suppose that the point $x=(x_0,x_1, \ldots, x_{n-1})\in \Pi_n$ is of the form $L_{n}^{-1}c$ for some vector $c$ with entries taken from $\{0,1,2,\ldots,n-1\}$.
Then $x=(x_0,x_1, \ldots, x_{n-1})\in \C_n$ is an integer point if and only if $x_1$ is an integer.
\end{proposition}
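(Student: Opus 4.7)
The plan is to apply Proposition~\ref{laplacianmodn} in the same spirit as the parallel statement for $n$-cycles in Section~\ref{ncycles}, reducing the integrality of $x$ to a single linear congruence modulo $n$ that factors through the coordinate $x_1$.

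First I would rewrite the hypothesis as $nx = (nL_n^{-1})c$, so that $x \in \Z^n$ if and only if every entry of $(nL_n^{-1})c$ lies in $n\Z$. Reducing modulo $n$ and invoking Proposition~\ref{laplacianmodn}, this is equivalent to $Mc \equiv 0 \pmod n$ componentwise, where $M$ is the matrix with columns $v_0, v_1, \ldots, v_{n-1}$ satisfying $v_k = k v_1$ in $\Z_n^n$.

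Next, using that $(v_1)_i \equiv -i \pmod n$ for $0 \leq i \leq n-1$, I would compute
\[
(Mc)_i \;=\; \sum_{k=0}^{n-1} (v_k)_i\, c_k \;\equiv\; -i \sum_{k=0}^{n-1} k c_k \pmod n.
\]
The crux is that the entire dependence on $i$ has been factored out: the integrality of $x_i$ is governed, for every $i$, by the single scalar $S(c) := \sum_{k=0}^{n-1} k c_k \pmod n$. Specializing $i = 1$ shows that $x_1 \in \Z$ is equivalent to $S(c) \equiv 0 \pmod n$, and this congruence then immediately forces $(Mc)_i \equiv 0 \pmod n$ for every $i$; the reverse implication is trivial. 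The $i = 0$ coordinate is automatically integral, consistent with the observation that the top row of $L_n^{-1}$ is all $1$'s and with $(v_1)_0 = 0$ annihilating the $i = 0$ slot in the display above.

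I do not expect a substantive obstacle: the argument is mechanical once Proposition~\ref{laplacianmodn} is in hand, and it mirrors the $n$-cycle case nearly verbatim. The only care required is to keep the indexing conventions straight (the leaf vertex $n$ is absent from the minor, while the adjacent cycle vertex $0$ produces the trivial column $v_0$) so that the identity $v_k = kv_1$ genuinely collapses all of the componentwise integrality conditions into the single congruence witnessed by $x_1$.
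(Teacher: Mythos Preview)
Your proposal is correct and is precisely the argument the paper has in mind: the paper's own proof reads simply ``The proof is almost identical to that for the $n$-cycle case,'' and what you have written is exactly that argument, spelled out via Proposition~\ref{laplacianmodn}. Your factorization $(Mc)_i \equiv -i\sum_k k c_k$ is just a slightly more explicit repackaging of the paper's observation that $v_k = kv_1$ forces $\sum_j v_{k,j}c_j \equiv k\sum_j v_{1,j}c_j$, since by symmetry of $L_n^{-1}$ the two quantities $\sum_k kc_k$ and $-v_1\cdot c$ coincide modulo $n$.
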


\begin{proof}
The proof is almost identical to that for the $n$-cycle case.
\end{proof}

Thus, the integer points in $\Pi_n$ are parametrized by the set of solutions to the system 
\[
0\cdot c_0 + \sum_{j=1}^{n-1}(n-j)c_j\equiv 0 \, (\bmod \, n) \, ,
\]
where $c_j\in \{0,1,2,\ldots,n-1\}$.
Equivalently, these points are parametrized by the set of partitions of multiples of $n$ into non-negative parts not exceeding $n-1$, with no more than $n-1$ of each part.

Our final observation in this subsection, and the key observation needed to apply Lemma~\ref{fpplemma} and complete the proof of Theorem~\ref{leafedcyclethm}, is that for any point $\lambda$ such that $L_n\lambda=c\in \Pi_n$, we have that $\lambda_0=\sum_ic_i$.
This is a consequence of Proposition~\ref{ncycletail}, which shows that the first row (and column) of $L_n^{-1}$ are vectors of all ones.


\subsection{Cyclically distinct compositions}

This subsection is devoted to the following conjecture, which provides a combinatorial interpretation for the coefficients of $\sigma_{\C_n}(q)$.
\begin{conjecture}\label{mainconjecture}
The number of integer points $\lambda\in\C_n$ with $\lambda_0=m$ is equal to the number of compositions of $m$ into $n$ parts that are cyclically distinct.
\end{conjecture}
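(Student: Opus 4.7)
The plan is to prove Conjecture~\ref{mainconjecture} by showing that both the generating function $\sigma_{\C_n}(q)$ and the generating function counting cyclic equivalence classes of compositions of $m$ into $n$ parts equal the common closed form
\[
G_n(q) \ := \ \frac{1}{n}\sum_{d\mid n}\varphi\!\left(\tfrac{n}{d}\right)\frac{1}{(1-q^{n/d})^d},
\]
where $\varphi$ denotes Euler's totient. The two sides will be evaluated by independent classical tools (a roots-of-unity filter on the cone side, Burnside's lemma on the composition side) that happen to produce the same expression.

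For the cone side, the clean starting point is the identification of integer points in $\C_n$ with non-negative integer vectors $a\in\Z_{\geq 0}^n$ satisfying $\sum_{j=1}^{n-1}j\,a_j\equiv 0\pmod{n}$, via the bijection $\lambda\leftrightarrow a:=L_n\lambda$. Non-negativity of $a$ is the cone condition, the congruence encodes integrality of $\lambda=L_n^{-1}a$ (extracted from Proposition~\ref{laplacianmodn}), and Proposition~\ref{ncycletail}, which states that the first row of $L_n^{-1}$ is all ones, yields $\lambda_0=\sum_j a_j$. Hence
\[
\sigma_{\C_n}(q) \ = \ \sum_{\substack{a\in\Z_{\geq 0}^n\\ \sum_{j=1}^{n-1} j a_j \,\equiv\, 0\,(\mathrm{mod}\,n)}} q^{\sum_{j=0}^{n-1} a_j}.
\]
I would then insert the roots-of-unity filter for the mod-$n$ congruence, interchange the resulting finite filter sum with the product over $j$, and evaluate each inner geometric series in $a_j$, arriving at $\sigma_{\C_n}(q)=\tfrac{1}{n}\sum_{k=0}^{n-1}\prod_{j=0}^{n-1}(1-q\zeta^{jk})^{-1}$ with $\zeta=e^{2\pi i/n}$. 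The standard simplification $\prod_{j=0}^{n-1}(1-q\zeta^{jk})=(1-q^{n/d})^d$ with $d=\gcd(k,n)$, combined with $\#\{k\in\{0,\dots,n-1\}:\gcd(k,n)=d\}=\varphi(n/d)$, converts this to $G_n(q)$.

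For the combinatorial side, I would apply Burnside's lemma to the rotation action of $\Z/n\Z$ on compositions of $m$ into $n$ parts. A composition is fixed by rotation by $k$ exactly when it is $d$-periodic with $d=\gcd(k,n)$, so such a fixed composition corresponds to a composition of $md/n$ into $d$ parts (when $n/d\mid m$; empty otherwise), contributing $1/(1-q^{n/d})^d$ to the $m$-generating function. Averaging over $k\in\Z/n\Z$ and grouping terms by $d=\gcd(k,n)$ again yields $G_n(q)$. Equating the two evaluations of $G_n(q)$ and reading off coefficients of $q^m$ finishes the proof.

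The step requiring the most care is the roots-of-unity simplification on the cone side, and this is where I would concentrate verification; the rest is standard generating-function bookkeeping, so I do not anticipate a serious obstacle. One perhaps unsatisfying feature of this plan is that it establishes the enumerative identity via two parallel closed-form computations rather than through an explicit bijection between integer points of $\C_n$ with $\lambda_0=m$ and cyclic equivalence classes of compositions of $m$ into $n$ parts; constructing such a bijection, ideally canonical, remains an interesting open question.
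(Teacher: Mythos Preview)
Your argument is correct and, notably, goes well beyond what the paper establishes. In the paper this statement is left as a \emph{conjecture}: only the special case $n$ prime and $\gcd(m,n)=1$ is proved (Theorem~\ref{ncycle1tail}), and Example~\ref{cycdistex} is offered as evidence that the obvious bijective approach fails. Your generating-function argument resolves the conjecture in full.

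The two approaches differ in kind. The paper's partial result tries to show directly that among the cyclic shifts of a composition $c$ of $m$ into $n$ parts, exactly one satisfies $L_n^{-1}c\in\Z^n$; this succeeds when $n$ is prime and $n\nmid m$ because the shift computation reduces to $v_{j,k}\sum_i c_i\not\equiv 0\pmod n$, which requires $n$ prime. As the paper notes, for $n\mid m$ every shift of a valid $c$ is again valid, so the orbit-by-orbit bijection cannot be uniform. Your proof avoids this obstruction entirely: on the cone side you use the bijection $\lambda\leftrightarrow a=L_n\lambda$ between $\C_n\cap\Z^n$ and $\{a\in\Z_{\ge 0}^n:\sum_j j a_j\equiv 0\pmod n\}$ (which follows from Propositions~\ref{ncycletail} and~\ref{laplacianmodn} together with the symmetry of $L_n^{-1}$, and is valid for all $n$, not just primes), then apply a roots-of-unity filter; on the combinatorial side you apply Burnside to the cyclic action. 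Both independently yield
\[
G_n(q)=\frac{1}{n}\sum_{d\mid n}\varphi\!\left(\frac{n}{d}\right)\frac{1}{(1-q^{n/d})^{d}},
\]
and equality of coefficients finishes the job. The cost is that the identification is purely enumerative rather than bijective, which you already flag; the gain is that the argument is short, uniform in $n$, and in particular disposes of the prime case $n\mid m$ and all composite $n$ that the paper leaves open. One small point worth making explicit in a write-up: the integrality criterion you cite is stated in the paper only for $c$ with entries in $\{0,\dots,n-1\}$, but since it depends only on $c\pmod n$ it extends immediately to all $a\in\Z_{\ge 0}^n$, giving the global bijection you need.
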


Conjecture~\ref{mainconjecture} is based on experimental evidence for small values of $n$; further, we used LattE \cite{LatteM} to check the conjecture for all prime values of $n$ up to $n=17$.
An obvious way to attack Conjecture~\ref{mainconjecture} is to attempt to prove that for any $\lambda\in \C_n$ with $\lambda_0=m$, we have that $\lambda = L_n^{-1}c$ for an integer vector $c$ whose entries form a weak composition of $m$ that is cyclically distinct from all other such $c$ vectors forming elements of $\C_n$.
Unfortunately, this does not work out, as the following example demonstrates.

\begin{example}\label{cycdistex}
The weak compositions of $3$ into $3$ parts are given by the following families $F_1,F_2,F_3,$ and $F_4$; each family is the orbit of a single composition under the cyclic shift operation.
\[
\begin{array}{llll}
F_1: & (3,0,0) & (0,3,0) & (0,0,3)  \\
F_2: & (2,1,0) & (0,2,1) &  (1,0,2) \\
F_3: & (1,2,0) & (0,1,2) & (2,0,1) \\
F_4: & (1,1,1) & &  \\
\end{array}
\]
The four compositions which satisfy $L_3^{-1}c\in \C_n$ are the elements of $F_1$ and $F_4$.
Thus, while this shows that Conjecture~\ref{mainconjecture} holds in this case, it is not a consequence of the $c$-vectors being themselves cyclically distinct.
\end{example}

While this obvious approach to Conjecture~\ref{mainconjecture} fails in certain cases, it is successful in others, as seen in the following theorem.
\begin{theorem}\label{ncycle1tail} 
Let $n$ be prime and fix $m$ such that $\gcd(m,n)=1$.
The number of integer points in $\C_n$ with first coordinate $m$ is equal to the number of compositions of $m$ into $n$ parts that are cyclically distinct.
\end{theorem}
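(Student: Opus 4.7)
The plan is to convert the theorem into a clean counting statement on weak compositions and then exploit a cyclic group action whose behavior becomes transparent under the coprimality hypothesis. First, since $L_n$ is an integer matrix, there is a bijection between integer points $\lambda\in\C_n$ and non-negative integer vectors $c=L_n\lambda\in\Z_{\geq 0}^n$ such that $L_n^{-1}c\in\Z^n$. By Proposition~\ref{ncycletail}, the first row of $L_n^{-1}$ is the all-ones vector, so $\lambda_0=\sum_j c_j$. By Proposition~\ref{laplacianmodn}, the matrix $nL_n^{-1}$ reduces modulo $n$ to the matrix with $(i,j)$-entry $-ij\bmod n$, so the integrality condition $L_n^{-1}c\in\Z^n$ collapses to the single congruence $\sum_{j=0}^{n-1} j c_j\equiv 0\pmod n$ (the $i=1$ condition, which forces all the others). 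Therefore the quantity to count is the number of weak compositions $(c_0,\ldots,c_{n-1})$ of $m$ into $n$ parts satisfying $\sum_{j=0}^{n-1} j c_j\equiv 0\pmod n$.

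Next, I would analyze the cyclic shift action $\sigma\colon (c_0,\ldots,c_{n-1}) \mapsto (c_{n-1},c_0,\ldots,c_{n-2})$ on the set of all weak compositions of $m$ into $n$ parts. A direct reindexing gives
\[
\sum_{j=0}^{n-1} j\,(\sigma c)_j \;=\; \sum_{i=0}^{n-2}(i+1)c_i \;\equiv\; \sum_{i=0}^{n-1}(i+1)c_i \;=\; \sum_{i=0}^{n-1} i c_i + m \pmod n,
\]
so each cyclic shift increases the statistic $\sum_j jc_j$ by $m$ modulo $n$. Since $\gcd(m,n)=1$, the residues $km\bmod n$ for $k=0,1,\ldots,n-1$ cover each of $0,1,\ldots,n-1$ exactly once. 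Consequently, within any $\Z/n\Z$-orbit of size exactly $n$ there is a unique representative $c$ satisfying $\sum_j j c_j\equiv 0\pmod n$.

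To conclude, primality of $n$ forces every orbit to have size $n$: orbit sizes divide $n$, so the only alternative is size $1$, which requires $c$ to be constant and hence $m=nc_0$. This is incompatible with $\gcd(m,n)=1$ (the case $m=0$ being excluded, since $\gcd(0,n)=n\neq 1$). Thus every orbit has size exactly $n$, and the map $c\mapsto[c]$ is a bijection between weak compositions of $m$ satisfying $\sum_j j c_j\equiv 0\pmod n$ and cyclic equivalence classes of weak compositions of $m$ into $n$ parts. Combined with the reduction in the first paragraph, this proves the theorem.

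The substantive step is the first one: distilling the integrality condition on $L_n^{-1}c$ to the single congruence $\sum_j j c_j \equiv 0 \pmod n$ and recognizing that this is precisely the invariant that shifts by $m$ under $\sigma$. Once that alignment is set up, the two hypotheses play complementary, essentially automatic roles -- coprimality $\gcd(m,n)=1$ makes the shift act transitively on residues, while primality of $n$ rules out short orbits. Example~\ref{cycdistex} is instructive here: when $n\mid m$ the statistic is constant on each $\sigma$-orbit, multiple elements of the same orbit may satisfy the congruence simultaneously, and the orbit-representative bijection breaks down -- exactly the behavior our two hypotheses prevent.
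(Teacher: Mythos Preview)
Your proof is correct and follows essentially the same route as the paper: both arguments parametrize integer points of $\C_n$ by compositions $c$ satisfying the congruence coming from the rows of $nL_n^{-1}\bmod n$, and both show that a cyclic shift of $c$ changes the relevant residue by a nonzero multiple of $m$, so that coprimality and primality pin down a unique representative per orbit. Your version is, if anything, a bit tighter---you first collapse all the row congruences to the single $i=1$ condition $\sum_j jc_j\equiv 0$ (valid since $v_j=jv_1$), and you explicitly verify both existence and the orbit-size claim, whereas the paper's proof leaves the ``at least one shift works'' direction implicit.
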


\begin{proof}
We must show that for every composition $c$ of $m$ with $n$ parts, exactly one of the cyclic shifts of $c$ will produce an integer point when $L_n^{-1}$ is applied. 
Begin with a composition $c=(c_0,c_1, \ldots, c_{n-1})$ of $m$ such that $L_n^{-1}c\in \C_n\cap \Z^n$.
Then for all $j$,
\[
\sum_{i=0}^{n-1} c_iv_{j,i}\equiv  0 \, (\bmod ~n)
\]
where $v_j=(v_{j,0},v_{j,1}, \ldots ,v_{j,n-1})$ is the $j$th row of $nL_n^{-1} \, (\bmod ~n)$.
Now consider another cyclic ordering $c'$ of $c$ where we shift the position of each entry by $k$, so $c_k'=(c_k, c_{k+1}, \ldots ,c_{k+n-1})$ with the indices taken $\bmod ~n$.
Then the $j$th coordinate of $L_n^{-1}c'$ is given by
\[
\sum_{i=0}^{n-1} c_{i+k}v_{j,i} = \sum_{i=0}^{n-1}c_iv_{j,i-k}.
\]
By properties of $v_j$ given in Proposition~\ref{laplacianmodn}, shifting the $i$th coordinate of $v_j$ by $k$ is equivalent to subtracting the row $v_k$ from the row $v_j$ component-wise. So,
\begin{align*}
\sum_{i=0}^{n-1}c_iv_{j,i-k} \equiv & \sum_{i=0}^{n-1}c_i(v_{j,i}-v_{j,k}) \\
\equiv & \sum_{i=0}^{n-1}c_iv_{j,i} - \sum_{i=0}^{n-1}c_iv_{j,k} \\
\equiv & \, \, 0 - v_{j,k} \sum_{i=0}^{n-1}c_i \\
\end{align*}
Since $\sum_{i=0}^{n-1}c_i \not\equiv   0 \bmod n$ by assumption, and since $v_{j,k} \sum_{i=0}^{n-1}c_i \not\equiv  0 \bmod n$ for $j,k \not\equiv  0$ due to $n$ being prime, we have that 
\[
\sum_{i=0}^{n-1}c_iv_{j,i-k} \not\equiv 0 \, .
\]
Thus, the non-trivial cyclic shifts of $c$ do not correspond to integer points in $\C_n$.
\end{proof}

It is straightforward to show that for the case where $n$ is prime and $n|m$, for any composition $c$ of $m$ into $n$ parts that satisfies $L_n^{-1}c\in \C_n\cap \Z^n$ all the cyclic shifts of $c$ also satisfy this condition.
Thus, the remaining challenge regarding Conjecture~\ref{mainconjecture} in the case where $n$ is prime is to show that this union of orbits of compositions under the cycle action contains as many compositions as there are orbits of the cycle action.
While we have not been successful in proving this, the study of these compositions reveals a beautiful underlying geometric structure for our cone, which we discuss in the following section.


\section{Reflexive polytopes and leafed cycles of prime length}\label{reflexive}

For leafed cycles of prime length, the remaining cases of Conjecture~\ref{mainconjecture} turn out to be equivalent to the problem of enumerating points in integer dilates of an integer polytope, which is the subject of Ehrhart theory.
The polytope in question is a simplex, and we will prove that it has the additional structure of being reflexive, a property we now introduce.

\subsection{Reflexive polytopes}

An $n$-dimensional \emph{polytope} $P$ in $\R^n$ may equivalently \cite[Chapter 1]{ZieglerLectures} be described as the convex hull of at least $n+1$ points in $\R^n$ or as the intersection of at least $n+1$ linear halfspaces in $\R^n$.
When $P=\mathrm{conv}(V)$ and $V$ is the minimal set such that this holds, then we say the points in $V$ are the \emph{vertices} of $P$.
If $P=\mathrm{conv}(V)$ and $|V|=n+1$, then we say $P$ is a \emph{simplex}.
We say $P$ is \emph{integral} if the vertices of $P$ are contained in $\Z^n$.

For $P\subset \R^n$ an integral polytope of dimension $n$, and for $t\in \mathbb{Z}_{>0}$, set $tP:=\left\{ tp : p\in P\right\}$ and $L_P(t):=|\mathbb{Z}^n\cap tP|$, i.e. the number of integer points in $tP$ is $L_P(t)$.
Define the \textit{Ehrhart series for $P$} to be $\mathrm{Ehr}_P(x):=1+ \sum_{t\geq 1}L_P(t)x^t$.
A fundamental theorem due to E. Ehrhart \cite{Ehrhart} states that for an $n$-dimensional integral polytope $P$ in $\mathbb{R}^n$, there exist complex values $h_j^*$ so that
\[
\mathrm{Ehr}_P(x)=\frac{\sum_{j=0}^{n}h_j^*x^j}{(1-x)^{n+1}} \phantom{...} \mathrm{and}\phantom{...} \sum_j h_j^*\neq 0 \, .
\]
A stronger result, originally due to R. Stanley in \cite{StanleyDecompositions}, is that the $h_j^*$'s are actually non-negative integers; Stanley's proof of this used commutative algebra, though several combinatorial and geometric proofs have since appeared.
We call the coefficient vector $h_P^*:=(h_0^*,\ldots,h_d^*)$ in the numerator of the rational generating function for $\mathrm{Ehr}_P(x)$ the \textit{$h$-star vector} for $P$.
The volume of $P$ can be recovered as $(\sum_j h_j^*)/n!$.

Obtaining a general understanding of the structure of $h^*$-vectors of integral polytopes is currently of great interest.
Recent activity, e.g. \cite{BatyrevDualPolyhedra,BeckHosten,BeyHenkWills,BraunEhrhartFormulaReflexivePolytopes,HibiDualPolytopes,HaaseMelnikov,MustataPayne,Payne}, has focused on the class of reflexive polytopes, which we now introduce as they will be needed in Section~\ref{reflexive}.
Given an $n$-dimensional polytope $P$, the \emph{polar} or \emph{dual} polytope to $P$ is
\[P^\Delta = \left\{x\in \mathbb{R}^{n}: x\cdot p \leq 1 \textrm{ for all } p\in P\right\} \, .
\]
Let $P^\circ$ denote the topological interior of $P$.

\begin{definition}
An $n$-dimensional polytope $P$ is \emph{reflexive} if $0\in P^\circ$ and both $P$ and $P^\Delta$ are integral.
\end{definition}

Reflexive polytopes have many rich properties, as seen in the following lemma.
\begin{lemma}{\rm \cite{BatyrevDualPolyhedra,HibiDualPolytopes}}\label{refpolylem} $P$ is reflexive if and only if $P$ is an integer polytope with $0\in P^\circ$ that satisfies one of the following (equivalent) conditions:
\begin{enumerate}
	\item $P^\Delta$ is integral.
	\item $L_{P^\circ}(t+1)=L_P(t)$ for all $t\in \mathbb{Z}_{\geq 0}$, i.e. all lattice points in $\mathbb{R}^{n}$ sit on the boundary of some non-negative integral dilate of $P$.
	\item $h_i^*=h_{n-i}^*$ for all $i$, where $h_i^*$ is the $i^{\textrm{th}}$ coefficient in the numerator of the Ehrhart series for $P$.
\end{enumerate}
\end{lemma}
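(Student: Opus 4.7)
The plan is to prove the two genuine equivalences (1)$\iff$(2) and (2)$\iff$(3), since given the standing hypothesis that $P$ is integer with $0\in P^\circ$, the clause ``$P$ is reflexive'' is literally condition (1) rephrased via the definition.

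For (1)$\iff$(2), I would begin with the facet description $P=\{x:\langle a_i,x\rangle\le b_i,\ i=1,\ldots,m\}$ in which each $a_i$ is a primitive integer normal and each $b_i$ is a positive integer (the latter because $P$ is integer and $0\in P^\circ$). The vertices of $P^\Delta$ are then exactly the $a_i/b_i$, so (1) is equivalent to the assertion that every $b_i=1$. For a lattice point $x$, membership in $(t+1)P^\circ$ is the integer-valued condition $\langle a_i,x\rangle\le (t+1)b_i-1$, while $x\in tP$ reads $\langle a_i,x\rangle\le tb_i$. If every $b_i=1$ these coincide, so (2) holds via the identity bijection. Conversely, if some $b_j>1$ the inclusion $tP\subsetneq (t+1)P^\circ$ is strict on the $F_j$ side for every $t\ge 0$, and for $t$ large enough one can exhibit a lattice point in the strip $\{x:tb_j<\langle a_j,x\rangle\le (t+1)b_j-1\}$ that satisfies every other facet inequality, breaking the equality of counts.

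For (2)$\iff$(3), the engine is Ehrhart--Macdonald reciprocity, which in generating-function form reads $\sum_{t\ge 1}L_{P^\circ}(t)x^t=(-1)^{n+1}\mathrm{Ehr}_P(1/x)$. Writing $\mathrm{Ehr}_P(x)=h^*(x)/(1-x)^{n+1}$ and using the identity $(-1)^{n+1}(1-1/x)^{-(n+1)}=x^{n+1}/(1-x)^{n+1}$, I obtain
\[
\sum_{t\ge 1}L_{P^\circ}(t)x^t=\frac{x\bigl(x^n h^*(1/x)\bigr)}{(1-x)^{n+1}}.
\]
On the other hand, condition (2) is precisely the identity $\sum_{t\ge 1}L_{P^\circ}(t)x^t=x\cdot \mathrm{Ehr}_P(x)$. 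Equating numerators collapses to the polynomial identity $h^*(x)=x^n h^*(1/x)$, which unfolds into the palindromic relations $h_i^*=h_{n-i}^*$.

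The main obstacle I expect is the contrapositive step in (2)$\Rightarrow$(1): producing an explicit lattice point in the strip on the $F_j$ side that lies strictly inside every other facet of $(t+1)P$. The cleanest route I see is to fix a lattice point relatively interior to the $t$-dilate of $F_j$ (which exists for $t$ large since $a_j$ is primitive and $F_j$ is full-dimensional in its hyperplane) and then translate by a bounded integer vector pointing inward from $F_j$; convexity together with $0\in P^\circ$ keeps the perturbed point well inside the remaining facets. Everything else amounts to generating-function bookkeeping once reciprocity is invoked.
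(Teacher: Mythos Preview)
The paper does not supply a proof of this lemma; it is stated with attribution to \cite{BatyrevDualPolyhedra,HibiDualPolytopes} and then used as a black box in Section~\ref{reflexive}. So there is nothing in the paper to compare your argument against beyond the bare statement.

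That said, your outline is the standard one and is essentially correct. The reduction of (1) to the condition ``every $b_i=1$'' via primitivity of the facet normals is clean, and your (2)$\iff$(3) computation through Ehrhart--Macdonald reciprocity is exactly the textbook derivation. The one place that needs real work, as you flag, is the contrapositive (2)$\Rightarrow$(1): exhibiting a lattice point in $(t+1)P^\circ\setminus tP$ when some $b_j\ge 2$. Your proposed construction (take a lattice point in the relative interior of $tF_j$ and nudge it outward) does work once you invoke that $a_j$ is primitive, so the affine lattice on $F_j$ has full rank and relative-interior lattice points appear for $t$ large; the perturbation then stays inside every other facet of $(t+1)P$ because those inequalities are slack by at least $b_i\ge 1$ at the starting point. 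If you prefer to avoid this geometric step entirely, you can close the loop as (3)$\Rightarrow$(1) instead: palindromicity forces $h^*_n=h^*_0=1$, i.e.\ $P^\circ$ contains a single lattice point (namely $0$), and a short argument with the facet distances then pins every $b_i$ to $1$. Either route completes the proof.
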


Reflexive polytopes are simultaneously a very large class of integral polytopes and a very small one, in the following sense.
Due to a theorem of Lagarias and Ziegler \cite{lagariasziegler}, there are only finitely many reflexive polytopes (up to unimodular equivalence) in each dimension.
On the other hand, Haase and Melnikov \cite{HaaseMelnikov} proved that every integral polytope is a face of some reflexive polytope.
Because of this ``large yet small'' tension, combined with their importance in other areas of mathematics as noted before, whenever a reflexive polytope arises unexpectedly it is a surprise, an indication that something interesting is happening.


\subsection{Reflexive slices of Laplacian minor cones}

Our main result in this subsection is Theorem~\ref{reflexivethm}, which asserts that for prime $n$ values, reflexive polytopes arise as slices of the cone constrained by Laplacian minors for leafed $n$-cycles.
Note that when $n$ is not prime, this construction does not always yield reflexive simplices; this has been verified experimentally with LattE \cite{LatteM} and Lemma~\ref{refpolylem}.
However, exactly when the construction described below produces reflexive polytopes is at this time not clear.

Let $p$ be an odd prime and let $lCP_p$ be the simplex obtained by intersecting the cone constrained by a leafed $p$-cycle with the hyperplane $\lambda_1=p$ in $\R^p$.
\begin{theorem}\label{reflexivethm}
$lCP_p$ is reflexive (after translation by an integral vector).
\end{theorem}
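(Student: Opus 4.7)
The plan is to verify reflexivity through condition (3) of Lemma~\ref{refpolylem}: after showing that $lCP_p$ is an integral $(p-1)$-simplex with a unique interior lattice point, I will translate that point to the origin and confirm the $h^*$-vector is palindromic. Since integer translation preserves both integrality and the $h^*$-vector, this suffices to conclude reflexivity of the translate.

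First I would identify the vertices of $lCP_p$ explicitly. By Proposition~\ref{ncycletail}, every entry in the top row of $L_p^{-1}$ equals $1$, so each of the $p$ columns of $pL_p^{-1}$ is an integer vector with first coordinate $p$; these are the integral ray generators of $\C_p$ and are simultaneously the vertices of $lCP_p$. In particular $lCP_p$ is an integral simplex lying in the affine hyperplane $\{\lambda_0=p\}$, and $L_{lCP_p}(t)=\#(\C_p\cap\Z^p\cap\{\lambda_0=tp\})$ equals the coefficient of $q^{tp}$ in $\sigma_{\C_p}(q)$. Because the denominator $(1-q^p)^p$ in Theorem~\ref{leafedcyclethm} involves only powers of $q^p$, extracting those powers and substituting $q^p\mapsto x$ gives
\[
\mathrm{Ehr}_{lCP_p}(x)\;=\;\frac{\sum_{c\in S_p,\,p\mid\phi(c)} x^{\phi(c)/p}}{(1-x)^p}\,,
\]
so $h^*_s(lCP_p)=\#\{c\in S_p:\phi(c)=ps\}$ for $s=0,1,\dots,p-1$.

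The heart of the proof is palindromicity of this $h^*$-vector, which I would establish via the complementation involution $\tau\colon c\mapsto \bar c$ on $\{0,1,\dots,p-1\}^p$ defined by $\bar c_j:=(p-1)-c_j$. Clearly $\phi(\bar c)=p(p-1)-\phi(c)$, so $p\mid\phi(c)$ if and only if $p\mid\phi(\bar c)$, and then $\phi(\bar c)/p=(p-1)-\phi(c)/p$. To verify that $\tau$ preserves the congruence defining $S_p$, compute
\[
\sum_{j=1}^{p-1}(p-j)\bar c_j \;=\; (p-1)\sum_{j=1}^{p-1}(p-j) \;-\; \sum_{j=1}^{p-1}(p-j)c_j \;\equiv\; -\sum_{j=1}^{p-1}(p-j)c_j \pmod p,
\]
where the first term vanishes modulo $p$ because $\sum_{j=1}^{p-1}(p-j)=p(p-1)/2$ is divisible by $p$, using that $p$ is odd. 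Thus $\tau$ restricts to a bijection between $\{c\in S_p:\phi(c)=ps\}$ and $\{c\in S_p:\phi(c)=p(p-1-s)\}$, which gives $h^*_s=h^*_{p-1-s}$.

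To finish, note $h^*_0=1$ since only $c=0$ contributes, so palindromicity forces $h^*_{p-1}=1$. By Ehrhart--Macdonald reciprocity the leading $h^*$-coefficient of a $d$-dimensional integral polytope equals the number of its interior lattice points, so $lCP_p$ contains exactly one interior lattice point $v\in\Z^p$. Translating by $-v$ places the origin in the interior of $lCP_p$ without disturbing integrality or the $h^*$-vector, and Lemma~\ref{refpolylem}(3) then yields reflexivity of the translate. The main obstacle is precisely the congruence $p(p-1)/2\equiv 0\pmod p$ required for $\tau$ to preserve $S_p$; this is where the oddness of $p$ is genuinely used, and it suggests why the analogous construction need not produce reflexive simplices for general $n$.
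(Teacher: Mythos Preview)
Your argument is correct and takes a genuinely different route from the paper's. The paper verifies reflexivity directly from the halfspace description: it observes that the vertices of $lCP_p$ are the integral columns of $pL_p^{-1}$, exhibits the interior lattice point $\sum_i w_i$ (where $w_i$ is the $i$-th column of $L_p^{-1}$) satisfying $L_p(\sum_i w_i)=\mathbf{1}$, and then translates so that the facet inequalities read $A\lambda\geq -\mathbf{1}$ with $A$ integral. Your proof instead passes through Hibi's criterion (Lemma~\ref{refpolylem}(3)): you read off the Ehrhart series of $lCP_p$ from Theorem~\ref{leafedcyclethm} by extracting the $q^{tp}$ coefficients, and then establish the palindrome $h^*_s=h^*_{p-1-s}$ via the complementation involution $c\mapsto (p-1)-c$ on $S_p$.

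Each approach has its merits. The paper's argument is shorter and entirely elementary, and it produces the interior point and the reflexive facet presentation explicitly. Your argument is longer but yields more: it gives a combinatorial description of the $h^*$-vector of $lCP_p$ as $h^*_s=\#\{c\in S_p:\phi(c)=ps\}$, and the involution explains the symmetry transparently. It is also worth noting that both proofs use only that $p$ is odd (in yours, via $p(p-1)/2\equiv 0\pmod p$; in the paper's, via the integrality of $\sum_i w_i$, whose $k$-th coordinate equals $p+k(p-k)/2$), so in fact $lCP_n$ is reflexive for every odd $n$, and the experimentally observed failures for non-prime $n$ mentioned in the paper must occur at even values.
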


\begin{proof}
By elementary results about polar polytopes found in \cite[Chapter 2]{ZieglerLectures}, $P$ is a reflexive polytope if and only if $P$ is integral, contains the origin in its interior, and has a half-space description of the form 
\[
\{x\in \R^d : Ax\geq -\mathbf{1}\}
\]
where $A$ is an integral matrix and $\mathbf{1}$ denotes the vector of all ones.

We first observe that $lCP_p$ is clearly an integer polytope, as its vertices are the columns of $pL_p^{-1}$, which are integral.
A halfspace description of $lCP_p$ is given by $\lambda_0 = p$ and 
\[
\left[
\begin{array}{ccccc}
3 & -1 & 0 & \cdots & -1 \\
-1 & 2 & -1 & \cdots & 0 \\
0 & -1 & 2 & \cdots & 0 \\
\vdots & \vdots & \vdots & \ddots & \vdots \\
-1 & 0 & 0 & \cdots & 2 \\
\end{array}
\right]
\left[
\begin{array}{c}
\lambda_0 \\
\lambda_1 \\
\lambda_2 \\
\vdots \\
\lambda_{p-1} \\
\end{array}
\right]
\geq
\left[
\begin{array}{c}
0 \\
0 \\
0 \\
\vdots \\
0 \\
\end{array}
\right] \, .
\]
Let $w_i$ be the $i$th column of $L_p^{-1}$, and observe that $L_p(\sum_iw_i)=L_p(\sum_iL_p^{-1}(e_i))=\mathbf{1}$.
Thus, $\sum_iw_i$ is an interior integer point of our simplex $lCP_p$; we want to change coordinates so that $\sum_iw_i$ is translated to the origin.
We therefore consider solutions to our halfspace description of the form $\lambda+\sum_iw_i$.
It follows that the description of our translated $lCP_p$ does not use a $\lambda_0$-coordinate (since $\lambda_0+p=p$ implies $\lambda_0=0$), and is given by
\[
\left[
\begin{array}{cccc}
-1 & 0 & \cdots & -1 \\
2 & -1 & \cdots & 0 \\
-1 & 2 & \cdots & 0 \\
\vdots & \vdots & \ddots & \vdots \\
0 & 0 & \cdots & 2 \\
\end{array}
\right]
\left[
\begin{array}{c}
\lambda_1 \\
\lambda_2 \\
\vdots \\
\lambda_{p-1} \\
\end{array}
\right]
\geq
\left[
\begin{array}{c}
-1 \\
-1 \\
\vdots \\
-1 \\
\end{array}
\right] \, .
\]
Since $lCP_p$ is an integral simplex, and $\sum_iw_i$ is an integer vector by which we translated $lCP_p$, our translated $lCP_p$ is still integral.
Thus $lCP_p$ is reflexive, and our proof is complete.
\end{proof}

While the proof above is elementary, it ties the study of compositions constrained by graph Laplacian minors into an interesting circle of questions regarding reflexive polytopes.
The only remaining case of Conjecture~\ref{mainconjecture} is that the number of integer points in $m\cdot lCP_p$ is equal to the number of compositions of $mp$ with $p$ parts, up to cyclic equivalence, which is now asking for a combinatorial interpretation for the Ehrhart series of a reflexive simplex.
Because of reflexivity, the generating function for this series yields a rational function with a symmetric numerator, which yields a functional relation on the Ehrhart polynomial for this simplex.

The study of reflexive polytopes goes hand in hand with the study of several other interesting classes of polytopes; one such example are normal polytopes, where an integer polytope $P$ is \emph{normal} if every integer point in the $m$-th dilate of $P$ is a sum of exactly $m$ integer points in $P$.
In our attempts to prove the prime case of Conjecture~\ref{mainconjecture}, we noticed that our techniques (though unsuccessful at providing a proof) provided evidence suggesting that $lCP_p$ is normal.
Normality is implied by the presence of a unimodular triangulation for an integral polytope; while we are not yet certain of the existence of such a triangulation for $lCP_p$ or of the normality of $lCP_p$, it would not surprise us if such a triangulation can be found for all prime $p$.

Our final remark regarding $lCP_p$, reflexivity, and normality, regards the unimodality of the $h^*$ vector of $lCP_p$.
It is a major open question (see \cite{Payne} and the references therein) whether all normal reflexive polytopes have unimodal $h^*$-vectors.
For $p\leq 7$, we found using LattE \cite{LatteM} that the $h^*$-vector for $lCP_p$ is unimodal.
We again suspect that this holds in general.


\section{Leafed cycles of length a power of two}\label{2powercycles}

We conclude our paper by considering leafed cycles for non-prime values of $n$; specifically, we study when $n=2^k$ for some $k$.
Using LattE \cite{LatteM}, one observes that the cone constrained by a leafed $8$-cycle does not have a reflexive $8$-th slice (we suspect this to be the case for all non-prime values of $n$).
Nevertheless, the integer point transform of this cone exhibits some interesting ``near-symmetry'' in the numerator for small powers of $2$.
Based on experimental evidence, we offer the following conjecture.
\begin{conjecture}
Let $\C_{2^k}$ be the cone constrained by the Laplacian minor of a leafed $n$-cycle where $n = 2^k$ for some integer $k \geq 2$. Then the generating function has the form
\[
\sigma_{\C_{2^k}}(q, 1, 1, \ldots, 1) \ = \ \dfrac{f(q)}{ \left(1-q^{\displaystyle{2^k}}\right) \  \displaystyle{\prod_{i=0}^{k-1}} \left(1 - q^{\displaystyle{2^{k-(i+1)}}}  \right)^{\displaystyle{2^i}}}
\]
where $f(q)$ has the following property. 
Let $(a_0, \ldots, a_j)$ denote the coefficient list of $f(q)$. 
If we append a $0$ to the end of this coefficient list, then take the difference between the appended coefficient list and its reverse, we obtain the coefficient list of the polynomial
\[
\sum_{i=0}^{n-2} {n-2 \choose i} (-1)^{\displaystyle{i}} q^{\displaystyle{ni}}.
\]
\end{conjecture}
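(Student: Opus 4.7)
The strategy is to bootstrap directly from Theorem~\ref{leafedcyclethm}, which already gives
\[
\sigma_{\C_{2^k}}(q,1,\ldots,1)\;=\;\frac{\sum_{c\in S_{2^k}}q^{\phi(c)}}{(1-q^{2^k})^{2^k}},
\]
and to show that the numerator shares enough cyclotomic factors with $(1-q^{2^k})^{2^k}$ to collapse the denominator to the conjectured one. As a first step I would apply a roots-of-unity filter to the single congruence $\sum_{j=1}^{n-1}(n-j)c_j\equiv 0\pmod n$ defining $S_n$. Treating the unconstrained coordinate $c_0$ as contributing a factor of $(1-q^n)/(1-q)$ and summing the remaining geometric series over $n$-th roots of unity, then regrouping by $d=\gcd(r,n)$, yields the divisor-sum identity
\[
\sigma_{\C_n}(q)\;=\;\frac{1}{n}\sum_{e\mid n}\phi(e)\,\frac{1}{(1-q^e)^{n/e}},
\]
which for $n=2^k$ reduces to a sum over $e=2^i$ with $0\le i\le k$.

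Next comes cyclotomic bookkeeping. Using $1-q^{2^i}=\prod_{l=0}^{i}\Phi_{2^l}(q)$, a direct count of multiplicities shows that the conjectured denominator $D(q):=(1-q^{2^k})\prod_{i=0}^{k-1}(1-q^{2^{k-(i+1)}})^{2^i}$ factors as $\prod_{l=0}^{k}\Phi_{2^l}(q)^{2^{k-l}}$, and the same product arises as $\mathrm{lcm}_{0\le i\le k}(1-q^{2^i})^{2^{k-i}}$. Hence $D(q)$ is precisely the common denominator of the divisor sum, and we obtain the explicit formula
\[
f(q)\;=\;\frac{1}{2^k}\sum_{i=0}^{k}\phi(2^i)\,\frac{D(q)}{(1-q^{2^i})^{2^{k-i}}},
\]
in which each summand is visibly a polynomial. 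This establishes the rational form appearing in the conjecture.

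For the near-symmetry of $f$, my plan is to apply Stanley reciprocity $(-1)^n\sigma_{\C_n}(1/q)=\sigma_{\C_n^\circ}(q)$ for the open cone $\C_n^\circ$ and translate it through the explicit formula into an identity $f(q)-qf^*(q)=R(q)$, where $f^*$ denotes the reversal of $f$ and $R(q)$ is an explicit polynomial. Each summand $\phi(2^i)D(q)/(1-q^{2^i})^{2^{k-i}}$ is a product of palindromic cyclotomic factors, hence self-reciprocal up to a predictable monomial shift, so pairing divisors $e=2^i$ with $e=2^{k-i}$ and exploiting $\phi(2^i)=2^{i-1}$ should cause systematic cancellation and leave only the contributions from the extreme divisors $e=1$ and $e=2^k$.

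The main obstacle is collapsing this surviving defect to precisely $\sum_{i=0}^{n-2}\binom{n-2}{i}(-1)^iq^{ni}=(1-q^n)^{n-2}$ and aligning it with the ``append a zero, then subtract the reverse'' convention at the endpoints of the coefficient list. The $2$-adic structure of $n=2^k$ is essential here: $\phi(2^i)=2^{i-1}$ grows in lockstep with the binomial weights, and the ratio of the extreme-divisor contributions is a pure power of $(1-q^n)$, which is what should pin the exponent down to $n-2$ rather than $n-1$ or $n$. In parallel I would pursue a purely combinatorial angle, stratifying the $n^{n-1}$ integer points of the fundamental parallelepiped $\Pi_{2^k}$ by their first coordinate and seeking an explicit near-involution whose defect is controlled by the binomial coefficients $\binom{n-2}{i}$; making the endpoint shift encoded by appending a zero rigorously match this defect is where I expect the bulk of the technical work to concentrate.
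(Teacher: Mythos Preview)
This statement is presented in the paper as a \emph{conjecture}; the authors offer no proof, only experimental evidence. So there is no argument in the paper against which to compare your proposal.

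That said, your proposal contains genuine content. The roots-of-unity filter applied to Theorem~\ref{leafedcyclethm} does yield the divisor-sum identity
\[
\sigma_{\C_n}(q)=\frac{1}{n}\sum_{e\mid n}\phi(e)\,\frac{1}{(1-q^e)^{n/e}},
\]
and your cyclotomic bookkeeping is correct: the conjectured denominator $D(q)$ equals $\prod_{l=0}^{k}\Phi_{2^l}(q)^{2^{k-l}}$, which is precisely the least common multiple of the $(1-q^{2^i})^{2^{k-i}}$ for $0\le i\le k$. This gives a clean, rigorous proof that $\sigma_{\C_{2^k}}(q)=f(q)/D(q)$ with
\[
f(q)=\frac{1}{2^k}\sum_{i=0}^{k}\phi(2^i)\,\frac{D(q)}{(1-q^{2^i})^{2^{k-i}}}
\]
a polynomial. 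The rational form in the conjecture is therefore established---this already goes beyond what the paper does.

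The near-symmetry clause, however, you leave as a plan rather than a proof, and you say so yourself. Here you should be cautious: your explicit formula makes the conjecture directly testable. For $k=2$ one computes
\[
f(q)=\tfrac12\bigl[(1+q^2)^2+(1-q)^3(1+q)\bigr]=1-q+q^2+q^3,
\]
so appending a zero to the coefficient list and subtracting the reverse gives $1-2q+2q^3-q^4=(1-q)^3(1+q)$, a polynomial of degree $4$. The conjecture predicts $(1-q^4)^2$, of degree $8$. The degrees alone show that the near-symmetry clause, as literally written, cannot hold at $k=2$ for the $f$ uniquely determined by $f=\sigma_{\C_{2^k}}\cdot D$. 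Before investing effort in the reciprocity/involution programme you outline, you need to resolve this discrepancy---either the authors intend a different (non-reduced) numerator, or the conjecture requires reformulation. Until that is settled, any attempt to prove the near-symmetry statement exactly as stated is bound to fail.
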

The near-symmetry of the numerator polynomial indicates that there should be some interesting structure to the finite group obtained by quotienting the semigroup of integer points in $\C_{2^k}$ by the semigroup generated by a specific choice of integral ray generators for the cone.

\bibliographystyle{plain}
\bibliography{Braun}

\begin{thebibliography}{10}

\bibitem{andrewspa1}
George~E. Andrews.
\newblock Mac{M}ahon's partition analysis. {I}. {T}he lecture hall partition
  theorem.
\newblock In {\em Mathematical essays in honor of {G}ian-{C}arlo {R}ota
  ({C}ambridge, {MA}, 1996)}, volume 161 of {\em Progr. Math.}, pages 1--22.
  Birkh\"auser Boston, Boston, MA, 1998.

\bibitem{BatyrevDualPolyhedra}
Victor~V. Batyrev.
\newblock Dual polyhedra and mirror symmetry for {C}alabi-{Y}au hypersurfaces
  in toric varieties.
\newblock {\em J. Algebraic Geom.}, 3(3):493--535, 1994.

\bibitem{BeckBraunLe}
Matthias Beck, Benjamin Braun, and Nguyen Le.
\newblock Mahonian partition identities via polyhedral geometry.
\newblock {\em to appear in Developments in Mathematics, (memorial volume for
  Leon Ehrenpreis, edited by H. Farkas, R. Gunning, M. Knopp, and B. A.
  Taylor)}.
\newblock Preprint at http://arxiv.org/abs/1103.1070.

\bibitem{beckgesselleesavage}
Matthias Beck, Ira Gessel, Sunyoung Lee, and Carla Savage.
\newblock Symmetrically constrained compositions.
\newblock {\em Ramanujan J.}, 23:355--369, 2010.

\bibitem{BeckHosten}
Matthias Beck and Serkan Ho\c{s}ten.
\newblock Cyclotomic polytopes and growth series of cyclotomic lattices.
\newblock {\em Math. Res. Let.}, 13(4):607--622, 2006.

\bibitem{BeckRobinsCCD}
Matthias Beck and Sinai Robins.
\newblock {\em Computing the continuous discretely}.
\newblock Undergraduate Texts in Mathematics. Springer, New York, 2007.

\bibitem{BeyHenkWills}
Christian Bey, Martin Henk, and J{\"o}rg~M. Wills.
\newblock Notes on the roots of {E}hrhart polynomials.
\newblock {\em Discrete Comput. Geom.}, 38(1):81--98, 2007.

\bibitem{BiggsAGT}
Norman Biggs.
\newblock {\em Algebraic graph theory}.
\newblock Cambridge Mathematical Library. Cambridge University Press,
  Cambridge, second edition, 1993.

\bibitem{BraunEhrhartFormulaReflexivePolytopes}
Benjamin Braun.
\newblock An {E}hrhart series formula for reflexive polytopes.
\newblock {\em Electron. J. Combin.}, 13(1):Note 15, 5 pp. (electronic), 2006.

\bibitem{BeckBraunEulerMahonian}
Benjamin Braun and Matthias Beck.
\newblock Euler-{M}ahonian permutation statistics via polyhedral geometry.
\newblock Preprint.

\bibitem{BrightSavage}
Katie Bright and Carla Savage.
\newblock The geometry of lecture hall partitions and quadratic permutation
  statistics.
\newblock to appear in FPSAC proceedings, 2010.

\bibitem{corteelsavagewilf}
Sylvie Corteel, Carla~D. Savage, and Herbert~S. Wilf.
\newblock A note on partitions and compositions defined by inequalities.
\newblock {\em Integers}, 5(1):A24, 11 pp. (electronic), 2005.

\bibitem{SavageEtAlDigraphEnumeration}
J.~William Davis, Erwin D'Souza, Sunyoung Lee, and Carla~D. Savage.
\newblock Enumeration of integer solutions to linear inequalities defined by
  digraphs.
\newblock In {\em Integer points in polyhedra---geometry, number theory,
  representation theory, algebra, optimization, statistics}, volume 452 of {\em
  Contemp. Math.}, pages 79--91. Amer. Math. Soc., Providence, RI, 2008.

\bibitem{Ehrhart}
Eug{\`e}ne Ehrhart.
\newblock Sur les poly\`edres rationnels homoth\'etiques \`a {$n$}\ dimensions.
\newblock {\em C. R. Acad. Sci. Paris}, 254:616--618, 1962.

\bibitem{elashvilietal}
A.~Elashvili, M.~Jibladze, and D.~Pataraia.
\newblock Combinatorics of necklaces and ``{H}ermite reciprocity''.
\newblock {\em J. Algebraic Combin.}, 10(2):173--188, 1999.

\bibitem{HaaseMelnikov}
Christian Haase and Ilarion~V. Melnikov.
\newblock The reflexive dimension of a lattice polytope.
\newblock {\em Ann. Comb.}, 10(2):211--217, 2006.

\bibitem{HibiDualPolytopes}
Takayuki Hibi.
\newblock Dual polytopes of rational convex polytopes.
\newblock {\em Combinatorica}, 12(2):237--240, 1992.

\bibitem{LatteM}
Matthias K\"{o}ppe.
\newblock Latte macchiato, version 1.2-mk-0.6.
\newblock Available from URL http://www.math.ucdavis.edu/\%7emkoeppe/latte/,
  2009.

\bibitem{lagariasziegler}
Jeffrey~C. Lagarias and G{\"u}nter~M. Ziegler.
\newblock Bounds for lattice polytopes containing a fixed number of interior
  points in a sublattice.
\newblock {\em Canad. J. Math.}, 43(5):1022--1035, 1991.

\bibitem{MustataPayne}
Mircea Musta{\c{t}}{\v{a}} and Sam Payne.
\newblock Ehrhart polynomials and stringy {B}etti numbers.
\newblock {\em Math. Ann.}, 333(4):787--795, 2005.

\bibitem{PakGeomPart}
Igor Pak.
\newblock Partition identities and geometric bijections.
\newblock {\em Proc. Amer. Math. Soc.}, 132(12):3457--3462 (electronic), 2004.

\bibitem{Payne}
Sam Payne.
\newblock Ehrhart series and lattice triangulations.
\newblock {\em Discrete Comput. Geom.}, 40(3):365--376, 2008.

\bibitem{StanleyDecompositions}
Richard~P. Stanley.
\newblock Decompositions of rational convex polytopes.
\newblock {\em Ann. Discrete Math.}, 6:333--342, 1980.
\newblock Combinatorial mathematics, optimal designs and their applications
  (Proc. Sympos. Combin. Math. and Optimal Design, Colorado State Univ., Fort
  Collins, Colo., 1978).

\bibitem{StanleyVol1}
Richard~P. Stanley.
\newblock {\em Enumerative combinatorics. {V}ol. 1}, volume~49 of {\em
  Cambridge Studies in Advanced Mathematics}.
\newblock Cambridge University Press, Cambridge, 1997.
\newblock With a foreword by Gian-Carlo Rota, Corrected reprint of the 1986
  original.

\bibitem{ZieglerLectures}
G{\"u}nter~M. Ziegler.
\newblock {\em Lectures on polytopes}, volume 152 of {\em Graduate Texts in
  Mathematics}.
\newblock Springer-Verlag, New York, 1995.

\end{thebibliography}

\end{document}